\newtheorem{lemma}{Lemma}
\newtheorem{proposition}{Proposition}
\newtheorem{definition}{Definition}
\newtheorem{theorem}{Theorem}
\newtheorem{example}{Example}
\date{}
\title{Convex Hull, IP and European Electricity Pricing in a European Power Exchanges setting with efficient computation of Convex Hull Prices}
\author[1]{Mehdi Madani\footnote{Corresponding author. Email: mmadani3@jhu.edu}}
\author[2]{Carlos Ruiz} 
\author[1,3,4]{Sauleh Siddiqui}
\author[5]{Mathieu Van Vyve}
\affil[1]{Department of Civil Engineering, Johns Hopkins University, United States}
\affil[2]{Department of Statistics, Universidad Carlos III de Madrid, Avda. de la Universidad 30, 28911 Leganés, Spain}
\affil[3]{Department of Applied Mathematics and Statistics, Johns Hopkins University, United States}
\affil[4]{German Institute for Economic Research (DIW Berlin), Germany}
\affil[5]{CORE - Voie du Roman Pays 34 bte L1.03.01, 1348 Louvain-la-Neuve, Belgium}
\begin{document}

%

\vspace{-0.2cm}
{\let\newpage\relax\maketitle}

\begin{abstract}
This paper introduces a computationally efficient comparative approach to  classical pricing rules for day-ahead electricity markets, namely Convex Hull Pricing, IP Pricing and European-like market rules, in a Power Exchange setting with non-convex demand bids. These demand bids can, for example, be useful to large industrial consumers, and extend demand block orders in use by European Power Exchanges. For this purpose, we show that Convex Hull Prices can  be efficiently computed using continuous relaxations for bidding products involving start-up costs, minimum power output levels \emph{and} ramp constraints, or analogous versions on the demand side. Relying on existing efficient algorithmic approaches to handle European-like market rules for such bidding products,  we provide comparative numerical experiments using realistic data, which, together with stylized examples, elucidates the relative merits of each pricing rule from  economic and computational perspectives. The motivation for this work is the prospective need for mid-term evolution of day-ahead markets in Europe and in the US, as well as the importance of day-ahead price signals, since these (spot) prices are used as reference prices for many power derivatives. The datasets, models and algorithms programmed in Julia/JuMP are provided in an online Git repository.



\end{abstract}



\newpage



%
%
%
%
%
%
%
%
%
%

\section{Introduction}
\label{paper4-intro}
\subsection{Day-ahead markets in the EU and the US}

European electricity markets, and day-ahead markets in particular, will evolve in the coming years as stakeholders have to cope with challenging issues such as the renewable energy revolution and a growing complexity due to an increasing number of market players.  

These real-world day-ahead electricity markets exhibit non-convexities in the underlying microeconomic/optimization models due to binary variables representing typical technical constraints such as minimum power output levels and special cost structures such as start-up costs.  Finding a market equilibrium \emph{supported by uniform prices} in such non-convex markets is known to be impossible under general conditions, see e.g. \cite{gribik2007,vanvyve,ruiz2012,mvv-revisiting} and the examples in Section \ref{subsec-examples} below. Uniform pricing means that in the market outcome, every market participant of a same market segment (location and hour
of the day) will pay or receive the same electricity price and no other transfers or payments are considered. To circumvent this difficulty, near-equilibrium prices are computed in practice, together with side payments where applicable. A given pricing rule specifies the defining properties of such near-equilibrium prices, and these pricing rules have differing computational complexity.

This article focuses on these microeconomic and computational issues and proposes a comparative approach to key theoretical pricing rules representative of main ISOs in the US and European Power Exchanges under the Price Coupling of Regions project. We first point out a few differences regarding how these markets are organized in the US and in Europe.

Market structures on both continents differ by the nature and role of the stakeholders. For example, day-ahead markets - which are the spot markets for electricity trading - are operated in the US by Independent Systems Operators (e.g., PJM, MISO, ERCOT) which are non-profit federally regulated organizations, while such markets are organized in the EU by Nominated Market Operators (NEMO) in the European legislation. The CACM guidelines released by the European Commission \cite{cacm} describe the legal framework in which these NEMOs (e.g., EPEX Spot, OMIE, Nord Pool) operate.

Also, it is well known that US markets allow market participants such as plant owners to describe their technical constraints and cost structure in a more granular way than bidding products used by European Power Exchanges. A particular example is the consideration of minimum up and down times of a unit which can not be directly described with current European bidding products. However, appropriately generalized (see the discussion in \cite{mvv-revisiting}), European bidding products can be used to describe stylized unit commitment problems where market participants can incur start up costs, minimum power output levels and ramp constraints. Compared to US markets, these European markets also consider potentially non-convex demand orders. In the present article, for comparison purposes, we will therefore consider bidding products including these key characteristics. They essentially provide stepwise bid curves for each hour of the day, a minimum acceptance ratio for the first step to describe a minimum power output level, and ramp constraints limiting the increase or decrease of production between each hour. They also associate a fixed cost to a given family of such bid curves, which is used to model a start up cost. 

\subsection{Contribution and structure of this article} \label{paper3-subsec:contrib}

The contribution here is threefold. First, this paper shows that in the presence of startup costs, ramp constraints and minimum power output levels, the convex hull of market participants' feasible sets  are given by their continuous relaxations. Then, using the so-called ``primal approach" \cite{vanvyve,schiro2015,hua2016}, Convex Hull Prices can efficiently be computed using continuous relaxations of the primal welfare maximization, described in Section \ref{section-chp}.  Second, the paper technically and succinctly describes Convex Hull Pricing and IP Pricing using unified notation in a context which also includes non-convex demand bids representing elastic demand, which is described in Section \ref{section-chp} and Section \ref{section-IPpricing} respectively. This is done in order to fit the ``European power exchange setting" where such kind of bids - though less general than those considered here - can be used by market participants. European-like market rules are then discussed in Section \ref{section-europeanrules} and references to previous works providing state-of-the-art models and algorithms are provided. We argue in particular that EU-like rules should be considered as a computationally-challenging variant of IP Pricing. Third, these contributions are used to present stylized examples illustrating each of the three approaches and highlighting advantages or potential drawbacks. Section \ref{section-numtests} is devoted to large-scale numerical tests comparing the three approaches from both the computational and economic perspectives. To the best of our knowledge, such comparative numerical tests on large-scale instances for each of these three key pricing rules, and in the presence of elastic demand, have not been proposed in the literature. Finally, the source code and datasets used for computational experiments are provided in an online Git repository \cite{dam_comp}.

\section{Notation, basic examples and marginal pricing} \label{section-notation-examples}

We consider a social welfare maximization program (SWP) described by (\ref{chp-primal-1})-(\ref{chp-primal-3}). For the sake of conciseness, only one location is considered, but all the developments which follow can straightforwardly be carried out when multiple locations are connected through a transmission network described by linear inequalities such as a DC approximation network model. Moreover, we consider a model with a multi-period structure in order to explicitly consider ramp constraints in the models below.

\subsection{Notation and MIP model}
The Social Welfare Maximization Program (SWP) is described by:

\begin{align}
&\max_{(u,x)} \sum_c B_c(u_c, x_c) \label{chp-primal-1}\\
&\mbox{s.t.}\notag\\
&\quad\sum_c \sum_{ic \in I_c | t(ic) = t} Q_{ic} x_{ic} = 0 & \forall t \in T &\ \  [\pi_t] \label{chp-primal-2}\\ 
&\quad x_c \in \mathbb{R}^I &\ \forall c \in C \\
&\quad u_c \in \{0,1\} &\ \forall c \in C \\
&\quad(u_c,x_c) \in X_c &\ \forall c \in C \label{chp-primal-3}
\end{align}

where $X_c$ is the set describing the technical constraints proper to participant $c\in C$, while $B(.)$ represents the costs of production (with $B<0$), or the utility of consumption (with  $B>0$) corresponding to production levels $Qx<0$ or consumption levels $Qx>0$. Here, $x_c \in \mathbb{R}^I$ is a vector whose components $x_{ic}$ correspond to the respective \emph{acceptance levels} of several bids $ic$ with bid quantities $Q_{ic}$, or several steps of a step-wise bid curve (assumed to be monotonic, increasing for offer orders and decreasing for demand orders), which will all be controlled by the binary variable $u_c$.

Regarding the sets $X_c$ and the cost or utility functions $B_c$, we will consider the special case corresponding to a stylized unit commitment setting where minimum output levels, start-up costs and ramp constraints are considered, but not for example minimum up and down times, which is more general than the bids considered in Europe. See \cite{euphemia} for a detailed description of the bidding products currently proposed by European power exchanges.

For stepwise bid curves with an associated start-up cost (or utility of demand reduced by a constant term), the utility or cost $B_c$ are such that (\ref{chp-primal-1}) is given by:

\begin{equation}\label{lin-primal-obj}
\max_{(u,x)} \sum_c (\sum_{ic\in I_c} P^{ic}Q^{}_{ic}x_{ic}  - F_c u_c) 
\end{equation}

The sets $X_c$ are described by the following binary requirements and linear inequalities (\ref{X-eq-binary})-(\ref{X-eq7}), where conditions (\ref{X-eq2}) describe minimum power output levels (the parameters $r_{ic} \in [0;1]$ are typically strictly positive only for the first step of an offer curve), while conditions (\ref{X-eq6}) impose upward limits on the output increase from one period to another and (\ref{X-eq7}) impose downward limits on output decreases (ramp constraints):

\begin{align}
&u_c \in \mathbb{Z} & \  & \  \label{X-eq-binary} \\
&x_{ic} \leq u_{c} & \forall ic \in I_c, c\in C \ & [s_{ic}^{max}] \label{X-eq1} \\
&x_{ic} \geq r_{ic }u_{c} & \forall ic \in I_c, c\in C \ & [s_{ic}^{min}] \label{X-eq2} \\
&u_{c} \leq 1 &\forall c \in C &[s_{c}] \label{X-eq3}\\
&u \geq 0 \label{X-eq4}
\end{align}

\begin{multline}
\sum_{ic \in I_c  | t(ic) = t+1 } (-Q^{ic})x_{ic} - \sum_{ic \in I_c | t(ic) = t} (-Q^{ic})x_{ic} \leq RU_c\ u_c \\ \forall t \in \{1,...,T-1\}, \forall c \in C \hspace{0.7cm}  [g^{up}_{c,t}] \label{X-eq6}
\end{multline}

\begin{multline}
  \sum_{ic \in I_c | t(ic) = t} (-Q^{ic})x_{ic} - \sum_{ic \in I_c | t(ic) = t+1} (-Q^{ic})x_{ic} \leq RD_c\ u_c \\ \forall t \in \{1,...,T-1\}, \forall c \in C \hspace{0.7cm}   [g^{down}_{c,t}] \label{X-eq7}
\end{multline}

\subsection{Examples} \label{subsec-examples}

We now present two basic examples which will be discussed throughout the article, illustrating key aspects both of markets with non-convexities and of the peculiar pricing rules considered in our contribution. We first use them to show the mathematical impossibility in general of a market equilibrium  \emph{supported by uniform prices} in the presence of non-convexities.

\bigskip
\begin{example} \label{example1} We consider a market with two buy bids (A and B) and two sell bids (C and D) where a minimum acceptance ratio, as described in Table \ref{table_toyexample_nc_1}, or a start up cost, as described in Table \ref{table_toyexample_nc_2}, are associated to the sell bid C. We will refer to them later on as Examples 1.1 and 1.2 respectively. Both types of non-convexities can obviously be combined.

\begin{figure}[ht]
\begin{center}
\includegraphics[scale=0.1]{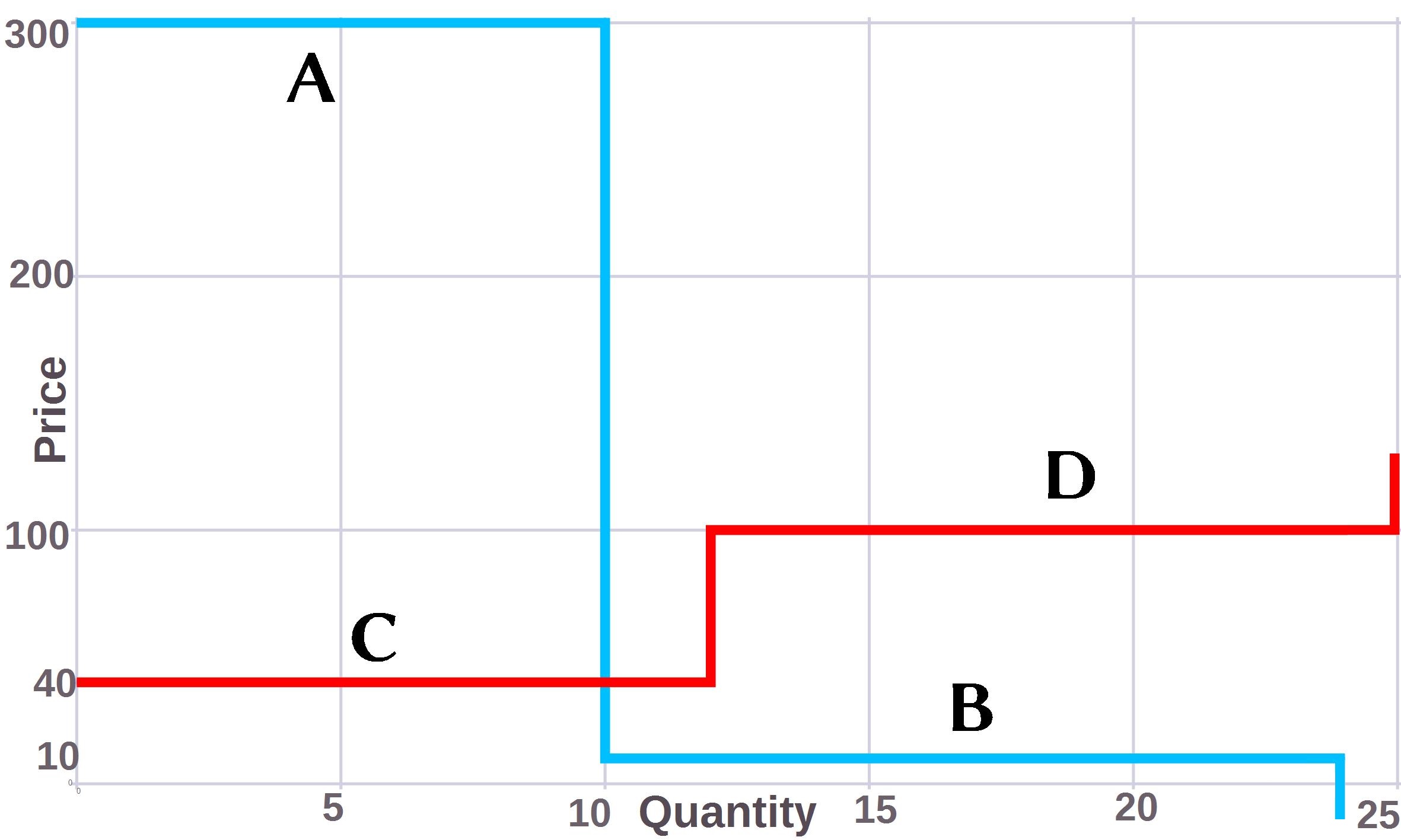}
\end{center}
\caption{Instance with a 'non-convex bid' C - start up cost or min. power output}
\end{figure}

\begin{table}[ht]
\begin{center}
\begin{tabular}{c|c|c|c}
Bids	&Quantity (MW)	&Limit price (EUR/MW)	&Min. Acceptance Ratio\\
\hline
A - Buy bid 	&10	 &300	& - \\
B - Buy bid  	&14  &10	& - \\
C - Sell bid 1	        &12	 &40	& $\frac{11}{12}$ \\
D - Sell bid 2	        &13  &100	& - \\
\end{tabular} 
\end{center}
\caption{Instance with a minimum acceptance ratio (minimum power output level)}
\label{table_toyexample_nc_1}
\end{table}

\begin{table}[ht]
\begin{center}
\begin{tabular}{c|c|c|c}
Bids	&Quantity (MW)	&Limit price (EUR/MW)	&Start-up costs\\
\hline
A - Buy bid (step 1)	&10	 &300	& - \\
B - Buy bid (step 2)	&14  &10	& - \\
C - Sell bid 1	        &12	 &40	& 200 \\
D - Sell bid 2	        &13  &100	& - \\
\end{tabular} 
\end{center}
\caption{Instance with start-up costs}
\label{table_toyexample_nc_2}
\end{table}
\end{example}

For example, the two toy examples presented above can readily be described as an instance of (\ref{chp-primal-1})-(\ref{chp-primal-3}) in the more special case described by (\ref{lin-primal-obj})-(\ref{X-eq7}): the left column below corresponds to the instance of Table \ref{table_toyexample_nc_1} and the right column to the instance of Table \ref{table_toyexample_nc_2}. Here, we drop the index $i$ as all the sets $I_c$ involved are singletons.

\vspace{2.5cm}

\begin{multicols}{2}
Example 1.1:
\begin{center}

$\max_{x,u}\ (10)(300)x_a + (14)(10)x_b - (12)(40)x_c - (13)(100)x_d$
\begin{align}
&\mbox{s.t.}\notag\\
&\quad 10x_a + 14x_b - 12x_c - 13x_d = 0 \\
&\quad x_a \leq u_a \\
&\quad x_b \leq u_b \\
&\quad x_c \leq u_c \\
&\quad x_c \geq (11/12)u_c \\
&\quad x_d \leq u_d \\
&\quad x \geq 0 \\
&\quad u \in \{0,1\}^4
\end{align}
\end{center}

\columnbreak
Example 1.2:
\begin{center}
$\max_{x,u}\ (10)(300)x_a + (14)(10)x_b - (12)(40)x_c - (13)(100)x_d - 200 u_c$
\begin{align}
&\mbox{s.t.}\notag\\
&\quad 10x_a + 14x_b - 12x_c - 13x_d = 0 \\
&\quad x_a \leq u_a \\
&\quad x_b \leq u_b \\
&\quad x_c \leq u_c \\
&\quad x_d \leq u_d \\
&\quad x \geq 0 \\
&\quad u \in \{0,1\}^4
\end{align}
\end{center}

\end{multicols}

The binary variables $u_a, u_b$ and $u_d$ can readily be set to 1 and removed from the formulation: they are associated to the simple ``convex bids" A, B and D, and actually not required as it is always optimal to set them to 1.

In the first case of a minimum acceptance ratio, the pure welfare maximizing solution is to accept C at its minimum acceptance level of $(11/12)$, that is to accept 11 MW from C, to fully accept A, and to accept the fraction of B needed to match the accepted fraction of C. For a market equilibrium to exist, the market price should be 10 EUR/MW, set by B which is fractionally accepted: otherwise, there would be either some leftover demand from B if the price is below, or B would prefer to be fully rejected if the price is above. However, at this market price, C is loosing $11(40-10)=330$ EUR and would therefore prefer not to be dispatched. Hence, there is no market equilibrium with uniform prices in the present case.

In the second case of the presence of start up costs, it can be easily checked that the pure welfare maximizing solution is to fully accept A, fully reject B, and accept the fraction of C needed to match A. Any level of acceptance of B would inevitably degrade welfare as the bid price of B is lower than the bid price of any other offer bid, and also, it can be readily checked that discarding C in order to avoid the associated start-up cost would also lead to less welfare (see the discussion of European rules below). The optimal welfare is hence ``the utility of A minus costs of the production by C", that is $10(300) - [10(40) + 200] = 2400$. Here again, if there is any market equilibrium supported by uniform prices, the price is set by fractionally accepted bids, here by C at 40 EUR/MW. However, at such a market price, C doesn't recover its start up costs and would prefer to be rejected: there is no market equilibrium with uniform prices.

\medskip

Let us note that in general, requiring equilibrium for the convex part of the problem, that is for the ``convex bids" which do not include start up costs or indivisibilities, as is the case for EU market rules and IP Pricing may be questionable. This is further discussed in their respective sections below. The following example aims at illustrating this aspect.

\medskip

\begin{example}\label{example2}
This example is described in Table \ref{table_toyexample_ippricing_nonintuitive}
\end{example}

\begin{table}[ht]
\begin{center}
\begin{tabular}{l|c|c|c}
Bids	&Quantity (MW)	&Limit price (EUR/MW)	&Min. Acc. Ratio\\
\hline
A - Sell bid 	        &50	 &30	    & - \\
B - Buy bid 	        &50  &130     	& - \\
C - Sell bid	        &40	 &40	    & - \\
D - Sell block bid      &200  &60	    & 1 \\
E - Buy block bid       &200  &90	    & 1 \\
\end{tabular} 
\end{center}
\caption{Instance with non-intuitive 'IP pricing' outcome}
\label{table_toyexample_ippricing_nonintuitive}
\end{table}

Concerning input data in this Example \ref{example2}, let us recall that fully indivisible bids (so-called block bids in EU markets) could correspond to real technical conditions of power plants as reported in \cite{ring}, p.9 concerning ``combustion turbine units for which the minimum and maximum outputs are the same."

It could be straightforwardly shown, for example by solving the corresponding MILP problem, that the welfare maximizing solution is here given by fully accepting A, B, D, E and rejecting C. 

If one imposes the constraints that the convex bids should be at equilibrium, as C is fully rejected, the commodity market price must be less than or equal to 40 EUR/MW (the marginal cost of C), and as A is fully accepted, the market price must be greater than or equal to 30 EUR/MW. For such a market price in [30;40], D is ``paradoxically accepted" with respect to the commodity price. In the case of IP Pricing discussed below in Section \ref{section-IPpricing}, it would receive a compensating ``start up price" as a make-whole payment. However, intuitively, one may prefer to set the price for example at 75 EUR/MW, in between the ``marginal costs" and ``marginal utility" of D and E respectively, in which case no make-whole payment is needed. If the bid C which is not part of the welfare maximizing solution is removed from the instance, such an outcome would correspond to a market equilibrium based on the commodity price only.

\section{Convex hull pricing with ramp constraints} \label{section-chp}

We now show how Convex Hull Prices (CHP)  can be efficiently computed for the stylized offer or demand bids presented above, including ramp constraints conditions. We also review their general uplift minimizing property in that specific setting. Convex Hull Pricing was first proposed in \cite{gribik2007}. Ring has proposed \cite{ring} to minimize so-called uplifts -- a formal definition is provided below -- made to market participants to compensate them from the actual losses or opportunity costs they face at the computed market prices. The key contribution in \cite{gribik2007} has been to show how to compute market prices minimizing the corresponding required uplifts using Lagrangian duality (see \cite{geoffrion1974} on classical Lagrangian duality results). Prices obtained are sometimes also called Extended Locational Marginal Prices (ELMP) \cite{wang2013,schiro2015}. The approach is of current interest in the US, see for example the contribution \cite{schiro2015} by researchers at the ISO New England.

In what follows, for notational convenience, when a multi-period setting is considered, we still use the compact notation applicable to a single-period setting, that is, formally:

\begin{center}
$\pi\sum_{ic \in I_c} Q_{ic} x_{ic} := \sum_{t}\pi_t \sum_{ic \in I_c | t(ic) = t} Q_{ic} x_{ic}$
\end{center}

Given an optimal solution $(u^*,x^*)$ and a market price $\pi$, the uplift of participant $c \in C$ is defined as:

\begin{center}
\begin{equation}
uplift_{(u_c^*,x_c^*)}(\pi) := \left(
\max_{(u_c, x_c) \in X_c} \left[ B_c(u_c, x_c) - \pi\sum_{ic \in I_c} Q_{ic} x_{ic} \right] \right) - \left( B_c(u^*_c, x^*_c) -  \pi\sum_{ic \in I_c} Q_{ic} x^*_{ic} \right)
\end{equation}
\end{center}

The interpretation is straightforward: the uplift is the gap between the maximum surplus participant $c$ could extract facing the market price $\pi$ by choosing the best option regarding only its own technical constraints, and the surplus obtained with this same market price and the welfare maximizing solution. This gap is trivially always non-negative.

The contribution \cite{gribik2007} has shown that market price(s) such that the sum of all these uplifts is minimal can be obtained by solving the Lagrangian dual of the welfare maximizing program (\ref{chp-primal-1})-(\ref{chp-primal-3}) where only the balance constraint(s) (\ref{chp-primal-2}) have been dualized. Indeed, \cite{gribik2007} considers a context where costs of production to serve a given load $y$ should be minimized, but can  be adapted to our context of two-sided auctions with both offers and demands.  We review here this result, specializing the model in \cite{gribik2007} to the present context and notation.

\bigskip

\begin{theorem}
Let $\pi^*$ solve the Lagrangian dual of (\ref{chp-primal-1})-(\ref{chp-primal-3}) where the balance constraint(s) (\ref{chp-primal-2}) have been dualized:

\begin{center}
\begin{equation}
\min_{\pi} \left[\max_{(u_c,x_c) \in X_c, c \in C} \left[ \sum_c B_c(u_c, x_c) - \pi\sum_c \sum_{ic \in I_c} Q_{ic} x_{ic} \right] \right] \label{chp-lagrange-1}
\end{equation}
\end{center}

Then, $\pi^*$ solves:

\begin{center}
\begin{equation}
\min_{\pi} \sum_c uplift_{(u_c^*,x_c^*)}(\pi)
\end{equation}
\end{center}

\end{theorem}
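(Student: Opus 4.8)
The plan is to show that, as a function of $\pi$, the Lagrangian dual objective in (\ref{chp-lagrange-1}) and the total uplift $\sum_c uplift_{(u_c^*,x_c^*)}(\pi)$ differ only by an additive constant, so that every minimizer of the former is a minimizer of the latter. First I would exploit separability. Since the objective $\sum_c B_c(u_c,x_c)$ and the penalized term $\pi\sum_c\sum_{ic\in I_c} Q_{ic}x_{ic}$ both decompose as sums over $c$, and since the only coupling constraint, the balance constraint (\ref{chp-primal-2}), has been dualized, the remaining feasible region is the product $\prod_c X_c$. The inner maximization therefore splits participant by participant, giving for the dual objective
\[
L(\pi) = \sum_c \max_{(u_c,x_c)\in X_c}\left[ B_c(u_c,x_c) - \pi\sum_{ic\in I_c} Q_{ic}x_{ic}\right].
\]

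Next I would expand $\sum_c uplift_{(u_c^*,x_c^*)}(\pi)$ directly from its definition. Summing the first bracketed term over $c$ reproduces exactly the separated expression above, namely $L(\pi)$. Summing the second bracketed term gives
\[
\sum_c\left( B_c(u_c^*,x_c^*) - \pi\sum_{ic\in I_c} Q_{ic}x_{ic}^*\right) = \sum_c B_c(u_c^*,x_c^*) - \pi\sum_c\sum_{ic\in I_c} Q_{ic}x_{ic}^*.
\]

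The key observation, and the only place where the hypothesis that $(u^*,x^*)$ is a welfare-maximizing (hence primal-feasible) solution is used, is that the $\pi$-dependence of this second term vanishes. Because $(u^*,x^*)$ satisfies the balance constraint (\ref{chp-primal-2}) in every period, we have $\sum_c\sum_{ic\in I_c | t(ic)=t} Q_{ic}x_{ic}^* = 0$ for each $t$, so by the compact-notation convention the penalized term equals $\sum_t \pi_t \cdot 0 = 0$. The second term thus collapses to the constant optimal welfare $\sum_c B_c(u_c^*,x_c^*)$, independent of $\pi$. Combining the two pieces yields
\[
\sum_c uplift_{(u_c^*,x_c^*)}(\pi) = L(\pi) - \sum_c B_c(u_c^*,x_c^*),
\]
and since the subtracted quantity does not depend on $\pi$, minimizing the left-hand side over $\pi$ is equivalent to minimizing $L(\pi)$. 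Hence $\pi^*$ solves both problems.

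I do not expect a genuine obstacle: the argument is purely algebraic once the right decomposition is set up. The only step demanding care is the separability of the inner maximization, which relies on the product structure of the feasible set after relaxing the coupling balance constraint together with the additivity of $B$ across participants; everything else follows from substituting primal feasibility of $(u^*,x^*)$ to kill the $\pi$-linear term.
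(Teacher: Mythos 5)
Your proof is correct and takes essentially the same approach as the paper: both exploit separability of the inner maximization over $\prod_c X_c$ and then use primal feasibility of $(u^*,x^*)$ to make the $\pi$-linear penalty term vanish, so that the Lagrangian dual objective and the total uplift differ only by the constant optimal welfare. The paper phrases this in duality-gap language (the gap $z^*-w^*$ equals the minimal total uplift) rather than your explicit identity $\sum_c uplift_{(u_c^*,x_c^*)}(\pi) = L(\pi) - \sum_c B_c(u_c^*,x_c^*)$, but the content is identical.
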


\begin{proof}
As the lower level program is separable in $c \in C$, the dual (\ref{chp-lagrange-1}) can equivalently be written as:

\begin{center}
\begin{equation}
z^* = \min_{\pi} \left[\sum_c \max_{(u_c,x_c) \in X_c, c \in C}  \left[ B_c(u_c, x_c) - \pi \sum_{ic \in I_c} Q_{ic} x_{ic} \right] \right] \label{chp-lagrange-1b}
\end{equation}
\end{center}

Let us observe that under constraint(s) (\ref{chp-primal-2}), we have:

\begin{center}
\begin{equation}
\sum_c B_c(u_c, x_c) = \sum_c B(u_c, x_c) - \pi\sum_c \sum_{ic} Q_{ic} x_{ic}
\end{equation}
\end{center}

Hence (\ref{chp-primal-1})-(\ref{chp-primal-3}) can equivalently be written with an \emph{arbitrary} $\pi$ as:

\begin{center}
\begin{equation}\label{chp-lagrange-2}
w^*(\pi) = w^* = \max  \left[ \sum_c \left[ B_c(u_c, x_c)- \pi\sum_{ic \in I_c} Q_{ic} x_{ic} \right] \right] 
\end{equation}
\end{center}

\begin{align}
&\sum_c \sum_{ic} Q_{ic} x_{ic} = 0 \label{chp-lagrange-2-constr1} \\  
&(u_c,x_c) \in X_c &\ \forall c \in C   \label{chp-lagrange-2-constr2}
\end{align}

By weak duality,  $w^* \leq z^*$. Moreover, as now detailed, the duality gap $DG = z^* - w^*$ exactly corresponds to the sum of the uplifts, and solving the Lagrangian dual hence aims at minimizing these. Again, let $(u^*, x^*)$ be a welfare optimal solution, i.e.  solving (\ref{chp-lagrange-2})-(\ref{chp-lagrange-2-constr2}), then $DG = z^* - w^*$ can be written as:

\begin{center}
\begin{equation}
\min_{\pi} \left[\sum_c \max_{(u_c,x_c) \in X_c, c \in C}  \left[ B_c(u_c, x_c) - \pi \sum_{ic \in I_c} Q_{ic} x_{ic} \right] - \sum_c \left[ B_c(u_c^*, x_c^*)- \pi\sum_{ic \in I_c} Q_{ic} x^*_{ic} \right] \right]
\end{equation}
\end{center}

or equivalently as:

\begin{center}
\begin{equation}
\min_{\pi} \left[\sum_c \left( \max_{(u_c,x_c) \in X_c, c \in C}  \left[ B_c(u_c, x_c) - \pi \sum_{ic \in I_c} Q_{ic} x_{ic} \right] -  \left( B_c(u_c^*, x_c^*)- \pi\sum_{ic \in I_c} Q_{ic} x^*_{ic} \right) \right)\right]
\end{equation}
\end{center}

This shows that solving the Lagrangian dual with the balance constraints dualized provides prices minimizing the sum of the uplifts.\end{proof}

As first observed in \cite{vanvyve} and more recently in \cite{schiro2015} and \cite{hua2016}, solving the Lagrangian dual can in certain situations be reduced to solving the continuous relaxation of the primal (\ref{chp-primal-1})-(\ref{chp-primal-3}). This holds when this continuous relaxation is itself equivalent to the following ``equivalent" formulation (under rather mild assumptions requiring the $X_c$ to be compact mixed integer linear sets, see \cite{hua2016}) of the Lagrangian dual to consider:


\begin{align}
&\max \sum_c B^{**}_{c, X_c}(u_c, x_c) \label{chp-primal-conv-obj}\\
&\mbox{s.t.}\notag\\ 
&\quad \sum_c \sum_{ic} Q_{ic} x_{ic} = 0 & [\pi] \label{chp-primal-conv-2}\\ 
&\quad (u_c,x_c) \in conv(X_c) &\ \forall c \in C \label{chp-primal-conv-3}
\end{align}

where $conv(X_c)$ denotes the convex hull of the feasible set $X_c$, and $B^{**}_{c, X_c}$ the convex envelope of $B_c$ taken over $X_c$, i.e., the lowest concave over-estimator  of $B_c$ on $conv(X_c)$, see \cite[Theorem 1]{hua2016} in a different setting where costs are minimized instead of welfare maximized. See also the underlying results in \cite{falk1969} used therein, or also \cite{geoffrion1974} for equivalent results in a mixer integer linear setting. In such a case, the optimal dual variables $\pi^*$ related to the constraint(s) (\ref{chp-primal-2}) of the continuous relaxation, which can often be obtained as a by-product when solving this continuous relaxation, provide an optimal solution to the Lagrangian dual (\ref{chp-lagrange-1}).

Following \cite{hua2016}, if $B_c$ are linear functions (the \emph{marginal} costs/utilities are constant), $B_c$ and $B^{**}_{c, X_c}$ have the same ``functional forms" and we are only required to describe $conv(X_c)$ appropriately. This motivates the study of possible polyhedral representations for $X_c$, for instance, a review is given in \cite{hua2016} which also considers quadratic cost functions and their convex envelopes over the $X_c$.

In our context, as the $B_c$ are linear functions, all we need is a description of $conv(X_c)$ where the $X_c$ are given above by (\ref{X-eq-binary})-(\ref{X-eq7}), which is stated in Theorem \ref{thm-chp-cr} below. The following Lemma is first required:

 \bigskip
 
\begin{lemma}\label{lemma:cvx-Axb}
Consider a polyhedron $P$ in $\mathbb{R}^n$ described by conditions $Ax \leq b$ and the set in $\mathbb{R} \times \mathbb{R}^n$  $X = \{(0,0)\} \cup \{(1,x) | x \in P \}$. Then $conv(X) = \{(u,x) \in \mathbb{R}\times \mathbb{R}^n | \ 0\leq u \leq 1, Ax \leq b u\}$.
\end{lemma}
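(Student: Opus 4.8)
The plan is to prove the two inclusions separately, writing $Y := \{(u,x) \in \mathbb{R} \times \mathbb{R}^n \mid 0 \le u \le 1,\ Ax \le bu\}$ for the candidate description on the right-hand side.

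For the inclusion $conv(X) \subseteq Y$, I would first check that $X \subseteq Y$ by testing the two generating pieces: the point $(0,0)$ satisfies $0 \le 0 \le 1$ and $A\cdot 0 = 0 \le b\cdot 0$, while any $(1,x)$ with $x \in P$ satisfies $0 \le 1 \le 1$ and $Ax \le b = b\cdot 1$. Since $Y$ is cut out by finitely many linear inequalities it is convex (indeed a polyhedron), and $conv(X)$ is by definition the smallest convex set containing $X$; hence $conv(X) \subseteq Y$. This direction is routine.

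For the reverse inclusion $Y \subseteq conv(X)$, I would take an arbitrary $(u,x) \in Y$ and, when $u > 0$, exhibit the explicit convex combination $(u,x) = u\,(1,\,x/u) + (1-u)\,(0,0)$, whose weights $u,\,1-u$ lie in $[0,1]$. The point $(1, x/u)$ belongs to $X$ because dividing $Ax \le bu$ by $u>0$ yields $A(x/u) \le b$, i.e. $x/u \in P$. Thus $(u,x)$ is a convex combination of two elements of $X$ and lies in $conv(X)$.

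The delicate case, and the step I expect to be the main obstacle, is the boundary $u = 0$. Any convex combination of points of $X$ (each of which has first coordinate $0$ or $1$) whose first coordinate equals $0$ must place all its weight on $(0,0)$, so $conv(X)$ contains $(0,x)$ only when $x = 0$; yet the constraint $Ax \le bu$ collapses to $Ax \le 0$, which is met by the entire recession cone of $P$. The stated equality therefore quietly requires a boundedness (or closedness) hypothesis: when $P$ is a bounded polyhedron---as holds for the sets $X_c$ of (\ref{X-eq1})--(\ref{X-eq7}), whose slice at $u=1$ is compact---the recession cone is trivial, so $Ax \le 0$ forces $x = 0$ and $(0,x) = (0,0) \in X$. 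I would make this assumption explicit (alternatively, one replaces $conv$ by the closed convex hull and recovers the missing $u=0$ face as the limit of $u\,(1, x/u)$ as $u \to 0^+$ along recession directions). Once the $u=0$ case is handled, combining the two inclusions gives $conv(X) = Y$.
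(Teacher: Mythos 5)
Your argument is, at its core, the same one the paper uses: the scaling correspondence $x \in P \Leftrightarrow A(ux) \le bu$ for $u > 0$, realized through the convex combination $(u,x) = u\,(1, x/u) + (1-u)\,(0,0)$. The paper just runs it in the opposite direction, writing every relevant convex combination of $(0,0)$ and $(1,x)$ as $(u, ux)$.

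Where you genuinely add something is the $u=0$ face, and you are right to flag it. The paper's proof states the equivalence only for $0 < u \le 1$ and then concludes $conv(X) = \{(u,\tilde{x}) \mid A\tilde{x} \le bu,\ 0 \le u \le 1\}$ without comment, which silently assumes that the $u = 0$ slice of the right-hand side reduces to $\{(0,0)\}$. As you observe, that is false for unbounded $P$: take $P = \{x \in \mathbb{R} \mid x \ge 0\}$, i.e. $A = -1$, $b = 0$; then $(0,1)$ satisfies $0 \le u \le 1$ and $Ax \le bu$, but any finite convex combination of points of $X$ whose first coordinate is $0$ must put all its weight on $(0,0)$, so $(0,1) \notin conv(X)$ (only the \emph{closed} convex hull recovers the recession directions). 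So the lemma as stated needs $P$ bounded, exactly as you say, and your proof is complete where the paper's has a gap. The gap is harmless in context: in Theorem \ref{thm-chp-cr} the lemma is applied to the sets described by (\ref{X-eq1})--(\ref{X-eq7}), whose slice at $u_c = 1$ satisfies $r_{ic} \le x_{ic} \le 1$ together with the ramp inequalities and is therefore compact, so the recession cone is trivial and the paper's conclusion stands. Your explicit boundedness hypothesis (or the alternative of taking closed convex hulls) is the correct way to state the lemma.
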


\begin{proof}
As $P$ is convex, the only case to consider is a convex combination of $(0,0)$ and $(1,x)$ where $x$ satisfies $Ax \leq b$ (the other cases are trivial).

Any such convex combination can be written as $(u, ux)$ for some $0 \leq u \leq 1$. Moreover, for any $0 < u \leq 1$,  $Ax \leq b \Leftrightarrow A ux \leq ub \Leftrightarrow A\tilde{x} \leq bu$ with $\tilde{x} = ux$. \\ Hence $conv(X) = \{ (u, \tilde{x}) | A\tilde{x} \leq bu, 0 \leq u \leq 1  \}$ which proves the result.
\end{proof}

We come to a key Theorem which shows that Convex Hull Prices can be computed efficiently in the presence of start-up costs, minimum power output levels, and ramp constraints. This follows from the fact that the convex hull of each market participant's feasible set is in this setting  obtained by taking the continuous relaxation in the space of the original variables and doesn't require to consider an extended space, i.e. adding auxiliary variables, to describe a set of which $conv(X_c)$ would be the projection. Note that we need to assume that minimum up and down times are not considered, which is the case for the bidding products we consider here and which are slightly more general than those proposed by European power exchanges. Describing the convex hull of market participant's feasible sets in the presence of minimum up and down times requires to consider extended spaces and currently known formulations would be less efficient for large-scale instances; see the review in \cite{hua2016}.

\bigskip

\begin{theorem} \label{thm-chp-cr}
Consider the market participant's feasible set $X_c$ described by (\ref{X-eq-binary})-(\ref{X-eq7}). Then $conv(X_c)$ is described by the continuous relaxation of $X_c$, i.e. by (\ref{X-eq1})-(\ref{X-eq7}).
\end{theorem}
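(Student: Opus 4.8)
The plan is to recognize the set $X_c$ as an instance of the structure treated in Lemma \ref{lemma:cvx-Axb} and then invoke that lemma directly. First I would observe that (\ref{X-eq-binary}), (\ref{X-eq3}) and (\ref{X-eq4}) together force $u_c \in \{0,1\}$, so $X_c$ splits into exactly two slices according to the value of $u_c$, and it suffices to identify each slice.

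Next I would analyze the two slices. When $u_c = 0$, constraints (\ref{X-eq1}) and (\ref{X-eq2}) read $x_{ic} \leq 0$ and $x_{ic} \geq r_{ic}\cdot 0 = 0$, forcing $x_{ic} = 0$ for every $ic$, while the ramp inequalities (\ref{X-eq6})--(\ref{X-eq7}) then reduce to $0 \leq 0$ and hold automatically; hence the only feasible point in this slice is the origin $(0,\mathbf{0})$. When $u_c = 1$, the constraints (\ref{X-eq1}), (\ref{X-eq2}), (\ref{X-eq6}), (\ref{X-eq7}) become a system of linear inequalities in $x_c$ alone, which I denote $P = \{x_c : A x_c \leq b\}$, where $A$ collects the coefficients and $b$ the right-hand sides evaluated at $u_c = 1$. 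This exhibits $X_c = \{(0,\mathbf{0})\} \cup \{(1, x_c) : x_c \in P\}$, precisely the form assumed in Lemma \ref{lemma:cvx-Axb}.

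The crucial point, which I would check constraint by constraint, is that the full continuous relaxation (\ref{X-eq1})--(\ref{X-eq7}) coincides with $\{(u_c,x_c) : 0 \leq u_c \leq 1,\ A x_c \leq b u_c\}$. Indeed each of (\ref{X-eq1}), (\ref{X-eq2}), (\ref{X-eq6}) and (\ref{X-eq7}) already carries $u_c$ linearly on its right-hand side, as $u_c$, $r_{ic}u_c$, $RU_c u_c$ and $RD_c u_c$ respectively, so setting $u_c = 1$ recovers exactly $A x_c \leq b$; together with (\ref{X-eq3})--(\ref{X-eq4}) giving $0 \leq u_c \leq 1$, this is the homogeneous-in-$u_c$ system required by the lemma. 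With this identification in place, Lemma \ref{lemma:cvx-Axb} applies and yields $conv(X_c) = \{(u_c,x_c) : 0 \leq u_c \leq 1,\ A x_c \leq b u_c\}$, which is the continuous relaxation, completing the argument.

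I expect the main obstacle to be bookkeeping rather than conceptual: one must verify that no constraint hides a constant term independent of $u_c$ (which would break the required form $A x_c \leq b u_c$) and that the $u_c = 0$ slice genuinely collapses to the single point $(0,\mathbf{0})$ rather than to the recession cone $\{x_c : A x_c \leq 0\}$. In our setting the latter is guaranteed by the pair (\ref{X-eq1})--(\ref{X-eq2}) squeezing $x_{ic}$ between $0$ and $0$; since the paper assumes $r_{ic} \in [0,1]$, the polyhedron $P$ is the expected nondegenerate object and no further care is needed.
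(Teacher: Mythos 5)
Your proposal is correct and takes essentially the same route as the paper's own proof: both recognize that $X_c = \{(0,\mathbf{0})\} \cup \{(1,x_c) : A x_c \leq b\}$, with $A x_c \leq b$ the constraints (\ref{X-eq1})--(\ref{X-eq7}) evaluated at $u_c = 1$, and then conclude by invoking Lemma \ref{lemma:cvx-Axb}. Your write-up merely makes explicit the bookkeeping the paper treats as obvious --- that the $u_c = 0$ slice collapses to the origin via (\ref{X-eq1})--(\ref{X-eq2}), and that every constraint is homogeneous in $u_c$ so the relaxed system has exactly the form $0 \leq u_c \leq 1$, $A x_c \leq b u_c$ required by the lemma.
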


\begin{proof}
As $u_c=0 \Rightarrow x_c=0$, obviously, $X_c = (0,0) \cup \{(1,x) | Ax \leq b\}$ where the conditions $Ax \leq b$ correspond to conditions (\ref{X-eq-binary})-(\ref{X-eq7}) written with $u_c =1$ (appropriately choosing $A,b$).

The result is hence a direct consequence of Lemma \ref{lemma:cvx-Axb}.
\end{proof}

Let us now observe the outcome Convex Hull Pricing (CHP) gives on the Examples described above. In the context of Examples 1.1 and 1.2, the sets $X_c$ are described by $r_c u_c \leq x_c \leq u_c, u_c \in \{0,1\}$, where $r_c$ is respectively $(11/12)$ and $0$. It is trivial to verify that in these cases, $conv(X_c)$ is described by its continuous relaxation, i.e. by $r_c u_c \leq x_c \leq u_c, 0\leq u_c \leq 1$ and this is also a simple special case of Theorem \ref{thm-chp-cr}.

\textbf{Example 1 (continued): Convex Hull Pricing case}

\begin{multicols}{2}
Example 1.1 (CHP case):

\begin{center}

$\max_{x,u}\ (10)(300)x_a + (14)(10)x_b - (12)(40)x_c - (13)(100)x_d$
\begin{align*}
&\mbox{s.t.}\notag\\
&\quad 10x_a + 14x_b - 12x_c - 13x_d = 0 & [\pi^* = 40]\\
&\quad x_a \leq 1 \\
&\quad x_b \leq 1 \\
&\quad x_c \leq u_c \\
&\quad x_c \geq (11/12)u_c \\
&\quad x_d \leq 1 \\
&\quad x\geq 0 \\
&\quad 0\leq u_c \leq 1
\end{align*}
\end{center}

\columnbreak

Example 1.2 (CHP case):

\begin{center}
$\max_{x,u}\ (10)(300)x_a + (14)(10)x_b - (12)(40)x_c - (13)(100)x_d - 200 u_c$
\begin{align*}
&\mbox{s.t.}\notag\\
&\quad 10x_a + 14x_b - 12x_c - 13x_d = 0 & [\pi^* = 56.6...] \\
&\quad x_a \leq 1 \\
&\quad x_b \leq 1 \\
&\quad x_c \leq u_c \\
&\quad x_d \leq 1 \\
&\quad x \geq 0 \\
&\quad 0\leq u_c \leq 1
\end{align*}
\end{center}

\end{multicols}

Hence, the outcomes are:

\begin{multicols}{2}

Example 1.1 (CHP case):

\begin{enumerate}
\item Welfare maximizing solution: fully accept A, accept $(11/12)$ of C, accept $(1/14)$ of B, fully reject D.

\item Market price: $\pi = 40$

\item Uplifts: no uplift for A, C, D, while $B$ requires an uplift of 30 EUR.

\end{enumerate}

\columnbreak

Example 1.2 (CHP case):

\begin{enumerate}
\item Welfare maximizing solution: fully accept A, accept $(10/12)$ of C, fully reject B, D.

\item Market price: $\pi = 56.6...$

\item Uplifts: no uplift for A, B, D, while $C$ requires an uplift of

 $[(12)56.6.. - ((12)40 + 200)] \\ - [(10)56.6.. - ((10)40 + 200)] \\ =  0 - (-33.333..) = 33.333..$
\end{enumerate}

\end{multicols}

One can readily check by solving the corresponding LP that the sum of the uplifts, respectively of 30 EUR and 33.333 EUR, correspond to the duality gaps.

\textbf{Example 2 (continued) : Convex Hull Pricing case}

Finally, let us consider Example \ref{example2} presented in Section \ref{section-notation-examples}. Solving the continuous relaxation, i.e. leaving aside that D and E can only be fully accepted or fully rejected, the optimal dual variable value of the balance constraint gives an uplift minimizing price of 60 EUR/MW. Only C requires an uplift, as at that price the participant would prefer to have the bid fully accepted, with a surplus of $40(60-40)=800$ instead of 0, the uplift hence being of 800 EUR. Let us note that one can consider market rules where only uplifts for actual losses and not those corresponding to opportunity costs would be paid to market participants. Though the outcome may be here more intuitive than when IP pricing is used (cf. the analogue example discussed in Section \ref{section-IPpricing}), the price is still influenced by bids rejected in the welfare maximizing solution, as the uplifts could correspond either to ``opportunity costs" or to ``actual losses" incurred.

\section{IP Pricing} \label{section-IPpricing}

This Section presents IP Pricing in the power exchange setting with non-convex demand bids and related notation considered in this paper. Compared to the historical exposition in \cite{oneill}, we also explicitly consider the economic interpretations of the so-called ``commitment prices" as potential losses or opportunity costs of accepted or rejected non-convex bids respectively (see Propositions \ref{proposition-da} and \ref{proposition-dr} below). These propositions are used to prove the equilibrium property of the IP Prices given by Theorem \ref{thm-ip-equilibrium}.

IP Pricing was first introduced by O'Neill et al. in \cite{oneill}. The proposition is to determine prices by using the convex part of the welfare maximization problem: \emph{roughly speaking} ``marginal units in the chosen unit commitment and dispatch are setting the price." More precisely, the approach proposed is to (a) maximize welfare, (b) fix all binary variables to the optimal values found, (c) derive commodity (electricity) prices as optimal dual variables of the balance constraints - as usual to determine locational marginal prices - and (d) start up prices (or commitment prices) as optimal dual variables to the constraints fixing the binary variables to their optimal value. The key contribution is to show that the derived price system supports a market equilibrium \emph{if the market rules specify that payments appropriately depend on both kinds of prices} (Theorem 2 of the original paper). This equilibrium property is derived in Theorem \ref{thm-ip-equilibrium} with the appropriate settlement rule formally given in Definition \ref{def-settlement} below.

The fact that the commodity prices are derived as optimal dual variables of the balance constraints in the restricted welfare maximizing problem where integer decisions are fixed implies that ``marginal units" (here whose production or consumption level is partial with regard to their technical capabilities) are setting the price. This could be derived from the equilibrium properties of prices obtained in well-behaved convex contexts such as for the welfare maximization programs obtained once the binary decisions have been fixed.

We now formally derive these results and then illustrate them with our key toy examples, see the continuation of Example \ref{example1} and Example \ref{example2} discussed below.

Let us consider the following fixing constraints given a partition of $C$ in accepted bids $C_a$ and rejected bids $C_r$. In the original IP Pricing proposition, this partition is the one given by the optimal solution to the primal welfare maximizing program (SWP):

\begin{align}
&u_{c_a} = 1 & \forall c_a \in C_a \subseteq C & \ \ \ [\delta_{c_a}] \label{fix1cond} \\
&u_{c_r} = 0 & \forall c_r \in C_r \subseteq C & \ \ \ [\delta_{c_r}] \label{fix0cond}
\end{align}

The dual variables $g^{up}_{c,t}$ and $g^{down}_{c,t}$ associated to (\ref{X-eq6})-(\ref{X-eq7}) do not exist for $t=T$ or $t=0$. However, for technical convenience to develop what follows, we set $g^{up}_{c,0} = g^{up}_{c,T}  = g^{down}_{c,0} = g^{down}_{c,T} = 0$.

The optimization problem dual to the welfare maximizing program where the $u_c$ have fixed values (SWP-FIXED) described by (\ref{lin-primal-obj}) and conditions, (\ref{chp-primal-2}), (\ref{X-eq1})-(\ref{X-eq2}), (\ref{X-eq6})-(\ref{X-eq7}) and (\ref{fix1cond})-(\ref{fix0cond}), is given by:
%
%
\begin{align}
&\min_{s, \pi, \delta} \sum_{c_a \in C_a \subseteq C} \delta_{c_a} \label{swp-fixed-dual-obj}\\
&\mbox{s.t.}\notag\\
&\quad s_{ic}^{max} - s_{ic}^{min} + (Q^{ic}g^{down}_{c,t(ic)-1} - Q^{ic}g^{up}_{c,t(ic)-1}) \notag\\
&\qquad \qquad \qquad \qquad \quad+ (Q^{ic}g^{up}_{c,t(ic)} - Q^{ic}g^{down}_{c,t(ic)}) + Q_{ic} \pi_{t(ic)} = P^{ic}Q_{ic} & [x_{ic}]  \label{cc-dual-xic} \\
&\quad\delta_{c_a} \geq \sum_{ic_a \in I_{c_a}} (s_{ic_a}^{max} - r_{ic_a}s_{ic_a}^{min}) - F_{c_a} + \sum_t (RU_{c_a} g^{up}_{c_a,t} + RD_{c_a} g^{down}_{c_a,t}) & [u_{c_a} := 1] \label{cc-dual-u1} \\
&\quad\delta_{c_r} \geq \sum_{ic_r \in I_{c_r}} (s_{ic_r}^{max} - r_{ic_r}s_{ic_r}^{min}) - F_{c_r} + \sum_t (RU_{c_r} g^{up}_{c_r,t} + RD_{c_r} g^{down}_{c_r,t}) & [u_{c_r} := 0] \label{da-cond} \\
&\quad s^{max}, s^{min}\geq 0
\end{align}

\begin{lemma}\label{lemma-profit-loss}
The following identity holds for an accepted bid $c$ (hence $u_c = 1$) and decomposes the profit or loss of the market participant $c$ in terms of 
particular economic surplus variables: 
\begin{center}
$\sum_{ic} -Q_{ic}(\pi_{t(ic)}-P^{ic})x_{ic} = \sum_{ic \in I_{c}} (s_{ic}^{max} - r_{ic}s_{ic}^{min})  + \sum_t (RU_c g^{up}_{c,t} + RD_c g^{down}_{c,t})$ 
\end{center}
\end{lemma}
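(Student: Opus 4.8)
The plan is to read the claimed identity off the Karush--Kuhn--Tucker conditions of (SWP-FIXED), combining the stationarity condition for each variable $x_{ic}$ with complementary slackness on the bound and ramp constraints. Throughout I take $(u^*,x^*)$ to be an optimal primal solution (with $u_c=1$) together with optimal dual multipliers $s^{max},s^{min},g^{up},g^{down},\pi$, so that primal feasibility, dual feasibility and complementary slackness all hold simultaneously.

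First I would exploit the stationarity relation (\ref{cc-dual-xic}), which is an equality of the dual (it is attached to the primal variable $x_{ic}$, which is free in (SWP-FIXED)). Rearranging it isolates the per-bid profit term as $Q_{ic}(P^{ic}-\pi_{t(ic)}) = (s_{ic}^{max}-s_{ic}^{min}) + Q^{ic}\bigl(g^{down}_{c,t(ic)-1}-g^{up}_{c,t(ic)-1}+g^{up}_{c,t(ic)}-g^{down}_{c,t(ic)}\bigr)$. Multiplying through by $x_{ic}$ and summing over $ic\in I_c$ reproduces the left-hand side of the lemma, and leaves two groups of terms on the right: a \emph{bound} group $\sum_{ic}(s_{ic}^{max}-s_{ic}^{min})x_{ic}$ and a \emph{ramp} group. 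The proof then reduces to simplifying these two groups.

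For the bound group I would invoke complementary slackness on (\ref{X-eq1}) and (\ref{X-eq2}), namely $s_{ic}^{max}(x_{ic}-u_c)=0$ and $s_{ic}^{min}(x_{ic}-r_{ic}u_c)=0$. Since $u_c=1$, these give $s_{ic}^{max}x_{ic}=s_{ic}^{max}$ and $s_{ic}^{min}x_{ic}=r_{ic}s_{ic}^{min}$, so the bound group collapses termwise to $\sum_{ic}(s_{ic}^{max}-r_{ic}s_{ic}^{min})$, exactly the first summand of the right-hand side of the lemma.

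The ramp group is where I expect the real work. I would first regroup the sum by period, writing $p_{c,t}:=\sum_{ic\mid t(ic)=t}(-Q^{ic})x_{ic}$ for the output of $c$ in period $t$; the ramp group then becomes $\sum_t p_{c,t}\bigl(g^{up}_{c,t-1}-g^{down}_{c,t-1}-g^{up}_{c,t}+g^{down}_{c,t}\bigr)$. Separately, complementary slackness on (\ref{X-eq6})--(\ref{X-eq7}) with $u_c=1$ gives $g^{up}_{c,t}(p_{c,t+1}-p_{c,t})=RU_c g^{up}_{c,t}$ and $g^{down}_{c,t}(p_{c,t}-p_{c,t+1})=RD_c g^{down}_{c,t}$, so the target summand $\sum_t(RU_c g^{up}_{c,t}+RD_c g^{down}_{c,t})$ equals $\sum_t\bigl[g^{up}_{c,t}(p_{c,t+1}-p_{c,t})+g^{down}_{c,t}(p_{c,t}-p_{c,t+1})\bigr]$. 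Shifting the index $t\mapsto t+1$ in the terms carrying $p_{c,t+1}$ rewrites this as $\sum_t p_{c,t}\bigl(g^{up}_{c,t-1}-g^{up}_{c,t}+g^{down}_{c,t}-g^{down}_{c,t-1}\bigr)$, which coincides with the regrouped ramp group. The delicate points are the sign bookkeeping (recall $Q^{ic}<0$ on the offer side), the off-by-one between the indices $t(ic)-1$ and $t(ic)$ that appear in (\ref{cc-dual-xic}), and the legitimacy of the reindexing at the two endpoints --- which is precisely what the boundary conventions $g^{up}_{c,0}=g^{up}_{c,T}=g^{down}_{c,0}=g^{down}_{c,T}=0$ are introduced to secure. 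Adding the simplified bound and ramp groups then yields the stated identity.
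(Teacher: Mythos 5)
Your proposal is correct and follows essentially the same route as the paper's own proof: stationarity (\ref{cc-dual-xic}) multiplied by $x_{ic}$, complementary slackness on (\ref{X-eq1})--(\ref{X-eq2}) with $u_c=1$ to collapse the bound terms, and complementary slackness on (\ref{X-eq6})--(\ref{X-eq7}) to convert the ramp terms into $\sum_t (RU_c g^{up}_{c,t} + RD_c g^{down}_{c,t})$. The only difference is presentational --- you make explicit the per-period regrouping via $p_{c,t}$ and the endpoint reindexing justified by the conventions $g^{up}_{c,0}=g^{up}_{c,T}=g^{down}_{c,0}=g^{down}_{c,T}=0$, which the paper compresses into ``after rearrangement.''
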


\begin{proof}
See appendix. \end{proof}

\begin{proposition} \label{proposition-da}
For an accepted bid $c_a$, the dual variable  $\delta_{c_a}$ corresponds to the  profit if positive, or the loss if negative, of the market participant $c$, which is given by $\sum_{ic_a} -Q_{ic_a}(\pi_{t(ic_a)}-P^{ic_a})x_{ic_a} - F_{c_a}$
\end{proposition}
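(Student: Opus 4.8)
The plan is to read $\delta_{c_a}$ directly off the optimality conditions of the linear program SWP-FIXED and then substitute the identity of Lemma~\ref{lemma-profit-loss}. The key structural observation is that in SWP-FIXED the component $u_{c_a}$ is a nonnegative primal variable whose value is pinned to $1$ by the fixing constraint~(\ref{fix1cond}); the multiplier of that equality is $\delta_{c_a}$, and the dual inequality generated by the sign-restricted variable $u_{c_a}\ge 0$ is precisely~(\ref{cc-dual-u1}).

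First I would show that~(\ref{cc-dual-u1}) is tight at any optimal dual solution. Two equivalent arguments work: directly, $\delta_{c_a}$ enters the minimization objective~(\ref{swp-fixed-dual-obj}) with coefficient $1$ and appears only in its own lower-bound constraint~(\ref{cc-dual-u1}), so it is driven down to equality; equivalently, since the accepted bid satisfies $u_{c_a}=1>0$, complementary slackness between this strictly positive primal variable and its associated dual inequality forces~(\ref{cc-dual-u1}) to hold with equality. Either way,
\[
\delta_{c_a} = \sum_{ic_a \in I_{c_a}} (s_{ic_a}^{max} - r_{ic_a}s_{ic_a}^{min}) - F_{c_a} + \sum_t (RU_{c_a} g^{up}_{c_a,t} + RD_{c_a} g^{down}_{c_a,t}),
\]
with all surplus variables evaluated at their optimal dual values.

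Next I would apply Lemma~\ref{lemma-profit-loss}, whose hypothesis $u_c=1$ holds here because $c_a\in C_a$. The Lemma replaces the sum of surplus variables above by $\sum_{ic_a} -Q_{ic_a}(\pi_{t(ic_a)}-P^{ic_a})x_{ic_a}$, and substituting gives
\[
\delta_{c_a} = \sum_{ic_a} -Q_{ic_a}(\pi_{t(ic_a)}-P^{ic_a})x_{ic_a} - F_{c_a},
\]
which is the stated expression. Reading the right-hand side economically, the sum is the net energy margin of $c_a$ at the commodity prices $\pi$ (revenue minus bid cost on each accepted step) and $F_{c_a}$ is the incurred start-up cost, so $\delta_{c_a}$ is the participant's profit when it is positive and its loss when it is negative.

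The step I expect to be delicate is the tightness argument: one must keep separate the fact that $\delta_{c_a}$ is the \emph{free} multiplier of the equality~(\ref{fix1cond}) and the fact that~(\ref{cc-dual-u1}) is the dual constraint attached to the \emph{sign-restricted} variable $u_{c_a}\ge 0$, so that the optimality reasoning pins $\delta_{c_a}$ down exactly rather than merely bounding it. The remainder is a direct substitution, since Lemma~\ref{lemma-profit-loss} already absorbs the complementary-slackness relations linking $s^{max}_{ic_a},s^{min}_{ic_a},g^{up}_{c_a,t},g^{down}_{c_a,t}$ to the primal optimum $x$.
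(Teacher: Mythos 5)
Your proposal is correct and follows essentially the same route as the paper: complementary slackness between the strictly positive primal variable $u_{c_a}=1$ and its dual inequality (\ref{cc-dual-u1}) forces that constraint to be tight, after which Lemma \ref{lemma-profit-loss} converts the surplus-variable expression into the stated profit/loss formula. Your alternative tightness argument (that $\delta_{c_a}$ appears only in the minimization objective and in its own lower-bound constraint, so it is driven to the bound) is a valid, slightly more self-contained variant of the same step, but it does not change the structure of the proof.
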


\begin{proof}
The complementarity conditions associated to constraints (\ref{cc-dual-u1}) satisfied by any pair of respectively primal and dual optimal solutions are given by:
\begin{center}
$u_{c_a}(\delta_{c_a} - \sum_{ic_a \in I_{c_a}} (s_{ic_a}^{max} - r_{ic_a}s_{ic_a}^{min}) + F_{c_a} - \sum_t (RU_{c_a} g^{up}_{c_a,t} + RD_{c_a} g^{down}_{c_a,t})) = 0$ with $u_{c_a}:=1$
\end{center}

Hence, $\delta_{c_a} = \sum_{ic_a \in I_{c_a}} (s_{ic_a}^{max} - r_{ic_a}s_{ic_a}^{min})  + \sum_t (RU_{c_a} g^{up}_{c_a,t} + RD_{c_a} g^{down}_{c_a,t}) - F_{c_a}$ and the results immediately follows from Lemma \ref{lemma-profit-loss}.
\end{proof}

\begin{proposition} \label{proposition-dr}
The dual variables $\delta_{c_r}$, when constrained by (\ref{da-cond}) to be positive, are upper bounds on the missed profit of market participant $c$ facing prices $\pi_t$. Note that in all cases, it is straightforward to see that in the dual, one can always consider the optimal solutions such that (\ref{da-cond}) is tight, as $\delta_{c_r}$ doesn't appear in the dual objective nor elsewhere in the dual.
\end{proposition}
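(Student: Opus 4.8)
The plan is to read the right-hand side of (\ref{da-cond}) as the bound produced by weak LP duality for a \emph{single-participant} profit-maximization problem, and to identify that problem's optimum with the missed profit. I would fix the optimal prices $\pi$ and the optimal multipliers $s^{max}_{ic_r}, s^{min}_{ic_r}, g^{up}_{c_r,t}, g^{down}_{c_r,t}$ of (SWP-FIXED), and take the missed profit to be the surplus that participant $c_r$ would obtain facing $\pi$ if it were committed, i.e. the value
\[
P^*(\pi) = \max_{x_{c_r}} \Big[ \sum_{ic_r \in I_{c_r}} Q_{ic_r}(P^{ic_r} - \pi_{t(ic_r)}) x_{ic_r} - F_{c_r} \Big]
\]
subject to $r_{ic_r} \leq x_{ic_r} \leq 1$ together with the ramp inequalities (\ref{X-eq6})--(\ref{X-eq7}) written with $u_{c_r} = 1$. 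Since the realized surplus of a rejected bid is $0$, this $P^*(\pi)$ is exactly what $c_r$ forgoes by being rejected.

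The key structural observation is that the dual-feasibility system of this standalone LP consists precisely of the equalities (\ref{cc-dual-xic}) restricted to $ic \in I_{c_r}$ (these involve only the multipliers of $c_r$ and the shared prices $\pi$), together with $s^{max}, s^{min}, g^{up}, g^{down} \geq 0$. The multipliers inherited from the optimal (SWP-FIXED) dual therefore form a feasible point of this standalone dual, and evaluating the standalone dual objective at them returns exactly the right-hand side of (\ref{da-cond}), namely $\sum_{ic_r}(s^{max}_{ic_r} - r_{ic_r} s^{min}_{ic_r}) + \sum_t (RU_{c_r} g^{up}_{c_r,t} + RD_{c_r} g^{down}_{c_r,t}) - F_{c_r}$, with the constant $-F_{c_r}$ carrying through unchanged. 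By weak LP duality the value of the standalone dual at any feasible point upper-bounds $P^*(\pi)$, so this right-hand side is $\geq P^*(\pi)$. Combining with (\ref{da-cond}), which forces $\delta_{c_r}$ to be at least that right-hand side, yields $\delta_{c_r} \geq P^*(\pi)$: the commitment price of a rejected bid upper-bounds its missed profit, which is positive precisely in the economically interesting case of a paradoxically rejected bid.

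For the final remark, I would note that $\delta_{c_r}$ occurs in the dual only through (\ref{da-cond}); in particular it is absent from the dual objective $\sum_{c_a \in C_a} \delta_{c_a}$, which ranges over accepted bids only, so any optimal dual solution may be modified to make (\ref{da-cond}) tight without changing its value, giving $\delta_{c_r}$ equal to the (upper-bounding) right-hand side. The step I expect to require the most care is the second one: checking that the per-index constraints (\ref{cc-dual-xic}) genuinely decouple across participants and coincide with the standalone dual, in particular that the ramp multipliers $g^{up}_{c_r,t-1}, g^{down}_{c_r,t-1}, g^{up}_{c_r,t}, g^{down}_{c_r,t}$ reassemble into the correct ramp contributions. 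An equivalent route that sidesteps an explicit dualization is to take any feasible $x_{c_r}$, replace each $Q_{ic_r}(P^{ic_r} - \pi_{t(ic_r)})$ using (\ref{cc-dual-xic}), and bound termwise via $x_{ic_r} \leq 1$, $x_{ic_r} \geq r_{ic_r}$ and the ramp inequalities; this mirrors the manipulation behind Lemma \ref{lemma-profit-loss} but produces an inequality instead of an equality, exactly because the inherited multipliers need not be optimal for the standalone problem.
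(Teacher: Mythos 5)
Your proposal is correct and is essentially the paper's own argument: the paper proves the bound by taking the optimal committed self-dispatch $x^*_{ic}$, multiplying the dual equalities (\ref{cc-dual-xic}) by $x^*_{ic}$, and bounding termwise via $r_{ic} \leq x^*_{ic} \leq 1$, the ramp inequalities written with $u_c = 1$, and nonnegativity of the multipliers --- which is precisely the ``equivalent route'' you describe at the end, i.e., an inline proof of the weak-duality inequality you invoke for the standalone committed LP. Your identification of (\ref{cc-dual-xic}) restricted to $I_{c_r}$ as that LP's dual feasibility system and of the right-hand side of (\ref{da-cond}) as its dual objective (with $-F_{c_r}$ carried through) is exactly what the paper's explicit manipulation establishes, and your handling of the tightness remark matches the paper's as well.
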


\begin{proof}
Let $x^*_{ic}$ correspond to optimal decisions for $c$ if it was committed under its own technical constraints when facing given prices $\pi_t$. In what follows, it is important to keep in mind that the dual variables, roughly speaking the prices $\pi_t$ and related quantities on the ``price side", together with the relations they satisfy, are given and independent of what would be ``on the quantity side" the optimal choices $x^*$ of $c$ optimizing when facing them.

In case of commitment, $u_c:=1$, and multiplying (\ref{X-eq6})-(\ref{X-eq7}) respectively by the dual variables $g^{up}_{c,t}$ and $g^{down}_{c,t}$, setting $u_c:=1$ and summing the equations obtained gives:

\begin{equation}\label{majoration-1}
\sum_t (RU_c g^{up}_{c,t} + RD_c g^{down}_{c,t}) \geq \sum_{ic} \left( (Q^{ic}g^{down}_{c,t(ic)-1} - Q^{ic}g^{up}_{c,t(ic)-1})x^*_{ic} + (Q^{ic}g^{up}_{c,t(ic)} - Q^{ic}g^{down}_{c,t(ic)})x^*_{ic} \right)
\end{equation}

On the other hand, as $s^{max},s^{min} \geq 0$ and the parameters $r_{ic} \in [0;1]$, one has:

\begin{equation}\label{majoration-2}
(s_{ic_r}^{max} - r_{ic_r}s_{ic_r}^{min}) \geq (s_{ic_r}^{max} - s_{ic_r}^{min})
\end{equation}

Multiplying (\ref{cc-dual-xic}) by $x^*_{ic}$, summing up over the $ic$ and using the upper bounds given by (\ref{majoration-1}) and (\ref{majoration-2}) shows after rearrangements that the right-hand side of (\ref{da-cond}) is bounded from below by the missed profit:

\begin{equation}
\sum_{ic_r \in I_{c_r}} (s_{ic_r}^{max} - r_{ic_r}s_{ic_r}^{min}) - F_{c_r} + \sum_t (RU_c g^{up}_{c,t} + RD_c g^{down}_{c,t}) \geq \sum_{ic} -Q_{ic}(\pi_{t(ic)}-P^{ic})x^*_{ic} - F_c
\end{equation}

The result then follows from (\ref{da-cond}) \end{proof}

The following Theorem states the equilibrium properties of the price system defined by $\pi_t$ (commodity prices) and the $\delta_c$ (commitment prices). \emph{It should be noted that Propositions \ref{proposition-da} \& \ref{proposition-dr} and Theorem \ref{thm-ip-equilibrium} hold whatever is the bid selection given by (\ref{fix1cond})-(\ref{fix0cond}): nothing in the proofs relies on the fact that the binary values in the fixing constraints (\ref{fix1cond})-(\ref{fix0cond}) are those obtained as optimal values of the welfare maximizing program.}
\medskip

As mentioned in the introduction to this Section, Theorem \ref{thm-ip-equilibrium} shows that the non-uniform price system $(\pi^*, \delta^*)$ supports a market equilibrium \emph{provided the settlement rule is appropriately defined as in Definition \ref{def-settlement} below}. It yields a market equilibrium in the sense that, given the market rules and the price system, the market participants could not be better off by choosing other production/consumption decisions satisfying their technical constraints.

\medskip

\begin{definition}[\textbf{IP Pricing Settlement Rule}] \label{def-settlement}
\ 
Given the market prices $(\pi^*, \delta^*)$:
\begin{itemize}
\item each seller $c$ \emph{is paid} $[ \sum_{ic\in I_c}  \pi^*_{t(ic)} (-Q^{}_{ic}x_{ic}) -\delta^*_c u_c ]$
\item each buyer $c$ \emph{pays} $-[ \sum_{ic\in I_c}  \pi^*_{t(ic)} (-Q^{}_{ic}x_{ic}) -\delta^*_c u_c ]$ 
\end{itemize} 
\end{definition}

\begin{theorem}[analogue of Theorem 2 in \cite{oneill}]\label{thm-ip-equilibrium}
For the price system given by $\pi^*, \delta^*$ obtained as dual variables to the constraints (\ref{chp-primal-2}) and (\ref{fix1cond})-(\ref{fix0cond}) respectively, the primal decision variable values $(x_c, u_c)$ obtained in (SWP-FIXED) (given by (\ref{lin-primal-obj}) and conditions, (\ref{chp-primal-2}), (\ref{X-eq1})-(\ref{X-eq2}), (\ref{X-eq6})-(\ref{X-eq7}) and (\ref{fix1cond})-(\ref{fix0cond})) are solving (the first bracketed term correspond to payments and the second to costs or utility):

\begin{equation}
\max_{u_c, x_c} [ \sum_{ic\in I_c}  \pi^*_{t(ic)} (-Q^{}_{ic}x_{ic}) -\delta^*_c u_c ]  - [\sum_{ic\in I_c} P^{ic} (-Q^{}_{ic}x_{ic})  +  F_c u_c  ]      \label{indiv-obj}
\end{equation}
s.t.
\begin{equation}\label{indiv-conds}
(u_c, x_c) \in X_c,
\end{equation}
 where again $X_c$ is described by (\ref{X-eq-binary})-(\ref{X-eq7}). 
\end{theorem}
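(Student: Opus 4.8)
The plan is to exploit the complete separability of the individual optimization (\ref{indiv-obj})--(\ref{indiv-conds}) across participants: both the objective and the membership condition $(u_c,x_c)\in X_c$ involve only the variables of a single $c$, so it suffices to fix an arbitrary $c$ and show that the value $(u_c,x_c)$ returned by (SWP-FIXED) maximizes, over $X_c$, the scalarized surplus $\Phi_c(u_c,x_c):=\sum_{ic\in I_c}-Q_{ic}(\pi^*_{t(ic)}-P^{ic})x_{ic}-(\delta^*_c+F_c)u_c$. Obtaining this form from (\ref{indiv-obj}) is just a term-by-term regrouping of payments and costs that isolates the commitment charge $(\delta^*_c+F_c)u_c$.

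First I would use that $u_c$ is binary (via (\ref{X-eq-binary}) together with the bounds $0\le u_c\le 1$) to reduce the individual problem to comparing two values. When $u_c=0$, constraints (\ref{X-eq1})--(\ref{X-eq2}) force $x_c=0$, so $\Phi_c=0$. When $u_c=1$, maximizing $\Phi_c$ becomes the linear program of maximizing $\sum_{ic}-Q_{ic}(\pi^*_{t(ic)}-P^{ic})x_{ic}$ over the ``committed polytope'' obtained by reading (\ref{X-eq1})--(\ref{X-eq7}) with $u_c=1$. Hence $\max_{X_c}\Phi_c=\max\{0,\,M_c-(\delta^*_c+F_c)\}$, where $M_c$ denotes the optimal value of that committed linear program (attained, since $X_c$ is compact).

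The crux is the inequality $\delta^*_c+F_c\ge M_c$, i.e.\ that the commitment price already covers the best surplus a committed participant could extract at prices $\pi^*$; this makes $M_c-(\delta^*_c+F_c)\le 0$ and forces $\max_{X_c}\Phi_c=0$. I would derive it exactly as in the proof of Proposition \ref{proposition-dr}: multiplying the dual-feasibility relation (\ref{cc-dual-xic}) by a committed optimizer $x^*_{ic}$, summing, and bounding the ramp and lower-bound contributions through (\ref{majoration-1})--(\ref{majoration-2}) shows that the common right-hand side of (\ref{cc-dual-u1}) and (\ref{da-cond}) is an upper bound on $M_c-F_c$. For a rejected $c$, inequality (\ref{da-cond}) then yields $\delta^*_c\ge M_c-F_c$ directly; for an accepted $c$, Proposition \ref{proposition-da} (complementary slackness forcing (\ref{cc-dual-u1}) tight) identifies $\delta^*_c$ with that same right-hand side, so again $\delta^*_c\ge M_c-F_c$. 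This is the step I expect to be the main obstacle, the delicate point being that Proposition \ref{proposition-dr} is phrased for rejected bids, whereas its bounding argument must be recognized as applying verbatim to the identically structured accepted constraint (\ref{cc-dual-u1}).

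Finally I would verify that the (SWP-FIXED) point attains the value $0$, which settles optimality. For a rejected $c$, (SWP-FIXED) sets $u_c=0$ and $x_c=0$, giving $\Phi_c=0$, while the committed alternative is nonpositive, so rejection is optimal. For an accepted $c$, Proposition \ref{proposition-da} gives $\delta^*_c=\sum_{ic}-Q_{ic}(\pi^*_{t(ic)}-P^{ic})x_{ic}-F_c$ at the (SWP-FIXED) allocation, whence $\Phi_c(1,x_c)=\sum_{ic}-Q_{ic}(\pi^*_{t(ic)}-P^{ic})x_{ic}-\delta^*_c-F_c=0$; combined with $\max_{X_c}\Phi_c=0$ this shows the (SWP-FIXED) allocation is an individual optimizer. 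Since $c$ was arbitrary and the problem separates, the collection of these individual optima solves (\ref{indiv-obj})--(\ref{indiv-conds}), which is the asserted equilibrium property; note that, as remarked before the theorem, nothing here used that the selection $(C_a,C_r)$ was the welfare-maximizing one.
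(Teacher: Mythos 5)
Your proposal is correct, but it is organized differently from the paper's own proof, and the difference is worth noting. The paper argues in two parts: Part I shows the commitment decision $u_c^*$ is optimal (Proposition \ref{proposition-da} for accepted bids, Proposition \ref{proposition-dr} for rejected ones), and Part II separately shows that, with $u_c^*$ fixed, the dispatch $x_c^*$ is optimal for the participant by verifying that the primal, dual and complementarity conditions of the participant's profit-maximizing LP are a subset of the optimality conditions of (SWP-FIXED) — a KKT-embedding argument. You instead collapse everything into the single scalar comparison $\max_{X_c}\Phi_c=\max\{0,\,M_c-(\delta^*_c+F_c)\}$ and prove the one inequality $\delta^*_c+F_c\ge M_c$ \emph{uniformly for accepted and rejected bids}, by observing that the weak-duality bounding argument in the proof of Proposition \ref{proposition-dr} (multiply (\ref{cc-dual-xic}) by a committed point, sum, bound via (\ref{majoration-1})--(\ref{majoration-2})) never uses rejection and hence applies to the identically structured right-hand side of (\ref{cc-dual-u1}); together with Proposition \ref{proposition-da} giving $\Phi_c=0$ at the (SWP-FIXED) allocation, this settles optimality without any analogue of Part II. You correctly flagged this extension as the delicate step, and it is sound: the argument uses only dual feasibility, the sign constraints on $s^{\max},s^{\min},g$, $r_{ic}\in[0,1]$, and membership of the committed polytope. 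What each route buys: yours is more compact and makes the equilibrium property a corollary of one inequality valid for every bid; the paper's Part II, though longer, makes explicit the structural fact that the participant's optimality system is embedded in the market operator's — the fact invoked later (Section \ref{section-numtests}) to explain why, under IP or European pricing, dispatch is always optimal for a participant conditional on its commitment.
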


\begin{proof}
A formal proof is detailed in appendix. It essentially uses Propositions \ref{proposition-da} and \ref{proposition-dr} and the interpretation of the commitment price $\delta^*_c$ to show that the value of $u_c^*$ given by the market operator is optimal for the market participant $c$, and then to show that for this given value of $u_c^*$ fixed, the values of the $x_{ic}^*$ given by the market operator are also optimal for the market participant (relying on optimality conditions of the market operator welfare maximizing program and the market participant profit maximizing program where in both cases $u_c^*$ is fixed).
\end{proof}

Let us consider now the Example \ref{example1}.1 above and its optimal solution: the market price is set to 10 EUR/MW by the convex bid B which is fractionally accepted, and the commitment price associated to the constraint fixing the commitment binary variable $u_c=1$ is (- 330) EUR, corresponding to the incurred loss to unit C at the given commodity market price. With the instance \ref{example1}.2, the market price would be 40 EUR/MW and the commitment price set to (- 200) EUR, again corresponding to the incurred loss. These prices for the commodity and the commitments can readily be derived as the optimal dual variables $\pi$ and $\delta$ (in square brackets) in:

\textbf{Example 1 (continued)}

\begin{multicols}{2}
Example 1.1 (IP Pricing case):

\begin{center}

$\max_{x,u}\ (10)(300)x_a + (14)(10)x_b - (12)(40)x_c - (13)(100)x_d$
\begin{align*}
&\mbox{s.t.}\notag\\
&\quad 10x_a + 14x_b - 12x_c - 13x_d = 0 & [\pi^* = 10]\\
&\quad x_a \leq 1 \\
&\quad x_b \leq 1 \\
&\quad x_c \leq u_c \\
&\quad x_c \geq (11/12)u_c \\
&\quad x_d \leq 1 \\
&\quad u_c = 1 & [\delta^* = -330] \\
&\quad x\geq 0 \\
\end{align*}
\end{center}

\columnbreak

Example 1.2 (IP Pricing case):

\begin{center}
$\max_{x,u}\ (10)(300)x_a + (14)(10)x_b - (12)(40)x_c - (13)(100)x_d - 200 u_c$
\begin{align*}
&\mbox{s.t.}\notag\\
&\quad 10x_a + 14x_b - 12x_c - 13x_d = 0 & [\pi^* = 40] \\
&\quad x_a \leq 1 \\
&\quad x_b \leq 1 \\
&\quad x_c \leq u_c \\
&\quad x_d \leq 1 \\
&\quad u_c = 1 & [\delta^* = -200] \\
&\quad x \geq 0
\end{align*}
\end{center}

\end{multicols}

\vspace{-0.5cm}

Given these prices $(\pi, \delta) = (10, -330)$ or $(40, -200)$ respectively, participant C receives as a payment $\pi (-Q_c x_c) - \delta u_c$ (keeping the sign convention according to which $Q<0$ for sell orders), here respectively $10(11) - (-330)1 = 440$ or 40(10) - (-200)1 = 600. In each case, it corresponds to the production costs of C, and the primal decisions $(u_c, x_c)$ are optimal for the market participant C. They respectively solve the following profit-maximizing programs (cf. Theorem \ref{thm-ip-equilibrium}):
\vspace{-0.7cm}

\begin{multicols}{2}

\begin{center}
\begin{align}
&\max_{u_c, x_c} [\pi(12)x_c - \delta u_c] -  [ 40(12)x_c ] \label{profitmax-example1}\\
&\mbox{s.t.}\notag\\
&\quad x_c \leq u_c \\
&\quad x_c \geq (11/12)u_c \\
&\quad u_c \in \{0,1\} \label{profitmax-example1_end}\\
&\quad [\pi:=10, \delta := -330] \nonumber
\end{align}
\end{center}

\columnbreak

\begin{center}
\begin{align}
& \max_{u_c, x_c} [12\pi x_c -\delta u_c] - [12(40)x_c  + 200 u_c] \label{profitmax-example2}\\
&\mbox{s.t.}\notag\\
&\quad x_c \leq u_c \\
&\quad x_c \geq 0 \\
&\quad u_c \in \{0,1\} \label{profitmax-example2_end}\\
&\quad[\pi:=40, \delta := -200] \nonumber
\end{align}
\end{center}

\end{multicols}

It may obviously happen that the committed units (i.e. such that $u_c=1$) are profitable at the market price(s) $\pi$, in which case the optimal dual variable $\delta^*$ to the fixing constraint is positive. In such a case, if strictly applied, IP pricing would require a payment $\pi (-Q_c x_c) -\delta u_c$, where $-\delta u_c$ is negative and corresponds to a situation where the market participant gives its marginal rent back to the Market Operator and makes zero profits, similarly to a pay-as-bid scheme. However, as described in the original contribution \cite[p.282]{oneill} about the practice of the New-York Independent System Operator NYISO and Pennsylvania–New Jersey–
Maryland Interconnection (PJM), and also in \cite{Sioshansi2014}, market rules could specify that such profits can be kept by market participants. In such a setting, IP pricing could be described as ``marginal pricing plus make-whole payments" as only losses are compensated, while market participants can keep rents if any at the given electricity market prices. Note that this approach seems also close to the current practice in Ireland \cite[pp.40-43]{dicosmo2016}.

Let us emphasize that according to the payment scheme described in Definition \ref{def-settlement}, no payment is made to non-committed units since then $u=0$ and $x=0$. However, it may happen that rejected bids are profitable at the commodity market prices, in which case $\delta$ is positive. In that situation, the term $- \delta u$ in the settlement rule makes the market participant indifferent to being committed or not: if $u$ was switched to one to allow a profitable generation of electricity, a corresponding payment from the market participant to the market operator would occur offsetting these potential profits. This is another - maybe surprising - aspect of the underlying idea of Theorem 2 in \cite{oneill}, and the fact that for the obtained price system, optimal primal variables of the welfare program are also solving the market participant's profit-maximizing programs like (\ref{profitmax-example1})-(\ref{profitmax-example1_end}) or (\ref{profitmax-example2})-(\ref{profitmax-example2_end}).

On the other hand, concerning uplifts and considering the bidding products proposed in Europe (so-called block orders), the reference \cite{oneill2007} shows that with IP pricing, provided the welfare is positive, a welfare maximizing solution is always such that there is enough welfare to finance compensations paid to bids losing money, so-called ``paradoxically accepted block orders," if they are allowed. This is also discussed in \cite{madani2015mip} and can be seen here by observing that by strong duality, the welfare is equal to the dual objective in (\ref{swp-fixed-dual-obj}): if the welfare is positive, this means that the sum of the $\delta_{c_a}$ such that $\delta_{c_a} >0$ is greater than the sum of the negative $\delta_{c_a}$ which correspond to losses to compensated. Hence, there is always enough economic surpluses to compensate these losses.

Finally, one recurring critique of the IP pricing approach is that it exhibits important commodity price volatility \cite{ring, ruiz2012}. Intuitively, the reason is  that the units which are marginal and hence setting this price -- and can have substantially different greater or lower marginal costs -- can quickly change with an increase of load. We argue here that it also leads to counter-intuitive market prices, as Example \ref{example2} shows.

\textbf{Example 2 (continued) : IP Pricing case}

In the case of IP Pricing which seeks marginal pricing without any further restriction but considers side payments to cover losses of market participants, the price would be ``stuck" between 30 EUR/MW and 40 EUR/MW, respectively because of the acceptance of A and the rejection of C. However, we have seen that a price of 75 EUR/MW would make sense as it supports a market equilibrium once $C$ is removed from the instance, and would appear as ``fair" to all other market participants, as in the discussion in Section \ref{subsec-examples}.

This potentially counter-intuitive outcome is partially related to an arbitrary distinction between bids including non-convexities and those which don't, and the fact that convex bids cannot be paradoxically rejected, while non-convex bids can be. As a consequence, rejected convex bids impose conditions on market prices, while rejected non-convex bids do not. 

It is also more generally related to the possibility for rejected bids, convex or not, to impact market prices; a property related to Property 4 in \cite{schiro2015}, namely the possibility for offline generators to set the market price (see Section D therein).

\section{European-like market rules} \label{section-europeanrules}

European market rules are intimately related to IP pricing proposed in \cite{oneill}. They can be generally described as IP Pricing plus the constraints that all start up prices (or commitment prices) $\delta_{c_a}$ of \emph{committed} plants - or more generally accepted non-convex bids - must be positive or null. This means that a non-convex bid cannot be paradoxically accepted, while marginal bids are setting the price. More precisely, this means that instead of considering a partition of accepted and rejected non-convex bids (\ref{fix1cond})-(\ref{fix0cond}) corresponding to the welfare maximizing solution, one only considers those partitions for which the conditions just mentioned about the $\delta_{c}$ hold. Hence no make-whole payments are needed. Most of the time, the welfare maximizing partition is then not feasible. Also, non-convex bids can be paradoxically rejected and are not compensated for the corresponding opportunity costs. This corresponds to a situation where the optimal dual variable $\delta_c$ associated to the constraint of the form $u_c=0$ rejecting the bid is positive. Let us recall that according to the IP Pricing rule, rejected bids are not compensated, as the payment of the form $\pi (-Qx_c) - \delta_c u_c$ is null if $u_c=0$, see the exposition of IP pricing market rules above. The term $- \delta_c u_c$ in the objective just makes the participant indifferent to being committed or not at electricity market prices $\pi$ as there is no real opportunity costs according to the definition of the payment rule.

Under these European rules, in Examples \ref{example1}.1 and \ref{example1}.2, the bid $C$ must be rejected. Once rejected, the market price is increased to 100 EUR/MW, and the bid C is paradoxically rejected in both cases. The market outcomes in both cases is depicted on Figure \ref{fig-eu}.

\begin{figure}[ht]
\begin{center}
\includegraphics[scale=0.1]{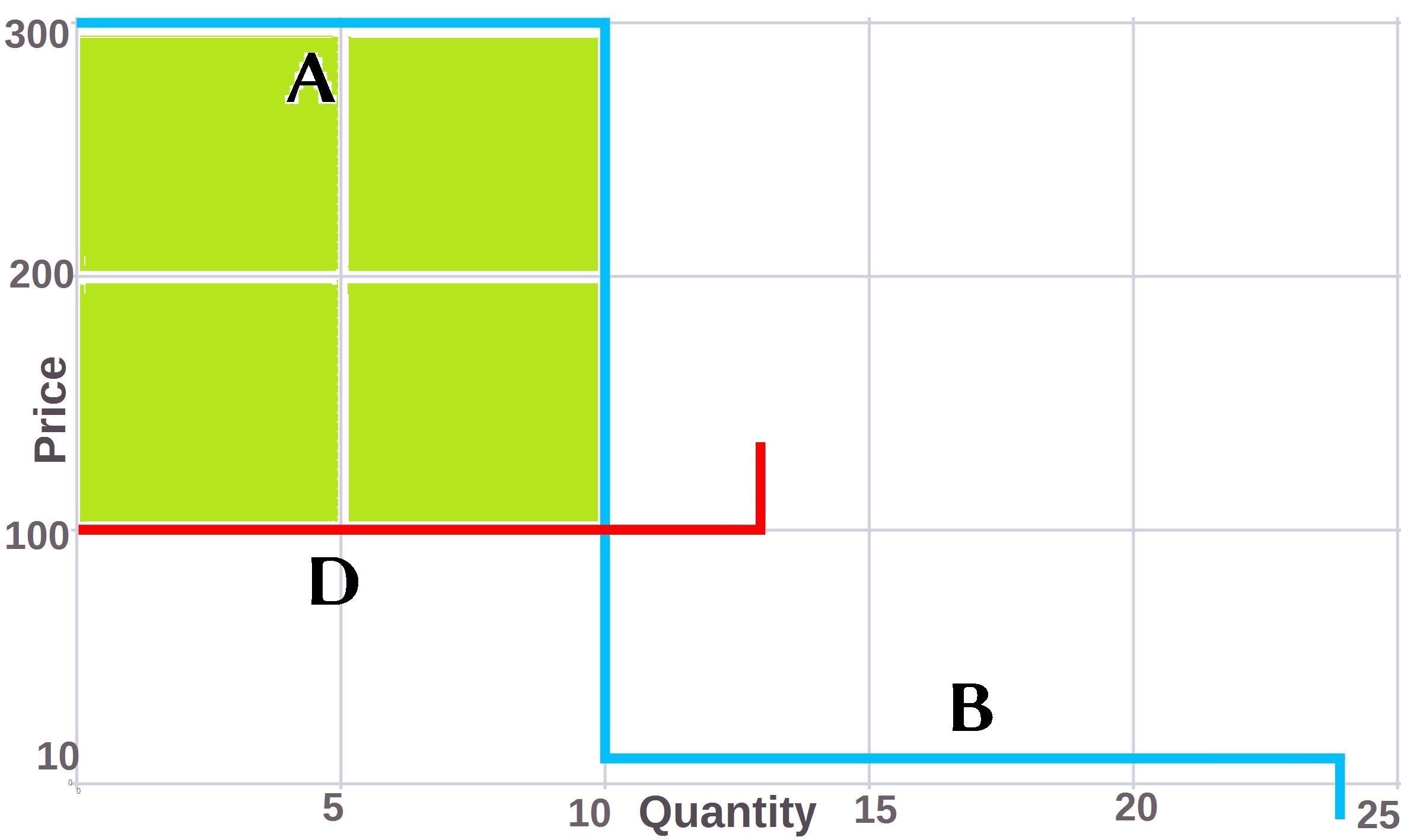}
\end{center}
\caption{A welfare sub-optimal solution satisfying European-like market rules for Example 1.1 and 1.2.}
\label{fig-eu}
\end{figure}

Let us note that in practice, there are hundreds of non-convex bids and only a limited fraction are paradoxically rejected. However, due to the increase of so-called block orders submitted in recent years, the number of these paradoxically rejected block orders has substantially increased and is a source of concern for all stakeholders. See \cite{madanivanvyve2016a} for a study using real Belgian market data from 2015.

Let us now consider Example \ref{example2}. The optimal solution under current European market rules is to fully accept A and B, and to reject C, D, E. The market price must then lie in the interval $[30;40]$ as A is fully accepted while C is a convex bid which is fully rejected and must hence be out-of-the money or at-the-money (i.e. the bid price must be  ``not good enough").

This can straightforwardly be shown using the following heuristic arguments. First, note that due to the bid quantities at hand, D is accepted if and only if E is accepted as well. However, if both are accepted, as no losses could be incurred, the market price must lie in the interval $[60;90]$. In this case, A, B and C are all strictly in-the-money and should be fully accepted, which leads to a contradiction as the balance constraint would be violated. So D and E must be both rejected which sets no particular condition on the outcome (as they are non-convex orders which can be paradoxically rejected), and it is then direct to check that given the bids A, B and C only, the market outcome is the one just described above. As for IP Pricing, such an outcome may be questionable.

These market rules lead to particularly interesting modeling and algorithmic issues related to peculiar MPEC and MILP models, which have been studied e.g. in \cite{mvv-revisiting, martin,madani2015mip, madani2014}.

\section{Numerical experiments} \label{section-numtests}

The numerical experiments are aimed at assessing the different microeconomic and computational tradeoffs offered by the historical pricing rules at hand. These experiments use realistic data comparable to instances coupling the Spanish and Portuguese day-ahead markets, with adaptations similar to those described in \cite{mvv-revisiting}: a minimum acceptance ratio has been added to the first step of the offer bid curves associated to a non-convex bid, and the marginal cost associated to this step takes into account information from an ad hoc variable cost which is provided in addition to the marginal cost curves in real Spanish and Portuguese instances. These instances contain bids with start-up costs, ramp constraints, step-wise marginal cost (resp. utility) curves and also a minimum power output level specified via the minimum acceptance ratio mentioned above. Though a two-node network is used here, more complex linear DC networks could be considered. The models and algorithms have been implemented in Julia (ver. 0.6.2) using the packages JuMP.jl (ver. 0.18.0) and CPLEX.jl (ver. 0.2.8), on a computer with an i7-8550u CPU (4 cores @ max 4 GHz) and 8 GB of RAM running Centos (RH Linux) 7, using CPLEX 12.7.1 as the underlying MIP solver. These models and algorithms, together with datasets, are provided in an online Git repository, in order to foster research on the topic \cite{dam_comp}.

\subsection{Welfare, side payments, and computational efficiency}

European-like market rules, which rely on uniform electricity prices, do not allow incurring losses to market participants at these given prices, and hence do not require uplifts, i.e., discriminatory payments, cf. Section \ref{section-europeanrules}. Avoiding such discriminatory payments has a cost in terms of total welfare, as illustrated by the examples in Section \ref{section-europeanrules}. It has also a cost in terms of computational efficiency: compared to IP Pricing, not all non-convex bids selections are admissible (see the discussion in Section \ref{section-europeanrules}) and the underlying combinatorial nature of the problem renders it challenging computationally, though efficient formulations and algorithms exist \cite{mvv-revisiting,euphemia,martin,madani2015mip,madani2014}. In the numerical tests below, for European-like market rules, the Benders decomposition described in \cite{mvv-revisiting} has been used.

IP Pricing requires us to solve the pure welfare maximizing problem, which as a MILP is comparatively much easier to solve for very large-scale instances, and then a basic LP. In the present context, the same holds for Convex Hull Pricing as the convex hull of market participants' feasible sets is given by the continuous relaxation. As a consequence, convex hull prices can be obtained by solving the (dual of the) continuous relaxation of the welfare maximization program which is an LP. However, in both cases, uplift payments are required to make participants whole and these discriminatory payments might be not well accepted by market participants or raise non-trivial implementation issues.

Tables \ref{table-num-1} and \ref{tab:runtimes} describe the characteristics of the instances used and presents numerical results illustrating the trade-offs between welfare, run times, and the amount of uplifts (side payments) required when IP Pricing or Convex Hull Pricing is used. The runtimes correspond in each case to solving the full problem, i.e., the time needed to determine both quantities and prices for the considered pricing rules. Let us note that the uplifts for Convex Hull pricing correspond to both actual losses and opportunity costs of market participants, while uplifts for IP Pricing only correspond to actual losses that should be compensated by definition of the settlement rule (see Section \ref{section-IPpricing}). It could hence be possible for an instance to have more uplifts reported for CHP than for IP Pricing, but counting the opportunity costs in the case IP Pricing is used would then show that CHP truly minimizes deviations from a market equilibrium with uniform prices.

As can be seen in Table \ref{table-num-1}, uplifts required are very small compared to the amount of welfare, and the same is true regarding the welfare losses when European pricing is used.

\begin{table}[htbp]
  \centering
  \caption{Welfares and uplifts (euros). The ``Welfare Loss (EU rules)" column indicates how much welfare is lost when European Pricing is used compared to the pure welfare optimal solution.}
    \begin{tabular}{ccccccc}
    Inst  & \# Non-Convex bids & \#Steps &Welfare & Welfare Loss & upliftsCHP & upliftsIP \\
      &   &  &(IP \& CHP) &  (EU rules)  &  &  \\
    1     & 90    & 14309 & 115426705.6 & 11084.8536 & 288.7258 & 7393.944 \\
    2     & 91    & 13986 & 107705738.5 & 5003.636 & 439.193 & 5000.8 \\
    3     & 91    & 14329 & 113999405.5 & 2141.15356 & 1030.314 & 6648.373 \\
    4     & 92    & 14594 & 109951139.7 & 9466.60112 & 603.5169 & 5827.93 \\
    5     & 89    & 14370 & 107172393.2 & 7754.3366 & 72.63568 & 867.284 \\
    6     & 87    & 14389 & 123823967.6 & 3377.139199 & 239.3088 & 1835.88 \\
    7     & 89    & 14783 & 119386085.4 & 6964.017 & 329.5143 & 3116.86 \\
    8     & 86    & 14414 & 105372099.8 & 2187.674081 & 72.25676 & 951.5828 \\
    9     & 88    & 14860 & 96023475.04 & 2046.41408 & 778.3553 & 5275.138 \\
    10    & 86    & 14677 & 98212635.81 & 2597.8314 & 401.637 & 2313.78 \\
    \end{tabular}%
  \label{table-num-1}%
\end{table}%

The instances at hand are ``easy instances" compared to instances which include thousands of so-called block bids, i.e., fully indivisible non-convex bids without associated start-up costs or ramp conditions and which are used in the NWE region (e.g., Great Britain, France, Belgium, Germany, Finland, Norway). For such more ``combinatorial" and challenging instances, the difference of run times to compute European Pricing outcomes versus IP Pricing or Convex Hull Pricing outcomes can be much more substantial, as numerical tests presented in \cite{madanivanvyve2016a} illustrate, while conclusions regarding welfare losses and required uplifts would essentially be the same: both welfare losses and required uplifts are rather small compared to the total welfare. The run times differences here are also due to aspects related to model generation: for IP Pricing, the continuous relaxation of the primal welfare maximizing program is used to compute prices (with the few additional constraints fixing binary variables to their value), while a full dual program must be built for European Pricing in order to test for the existence of electricity prices satisfying the constraints that no losses could be incurred (cf. the Benders decomposition described in \cite{mvv-revisiting}).

\begin{table}[htbp]
  \centering
  \caption{Run times for each pricing rule (in seconds)}
    \begin{tabular}{cccccc}
    Inst  & \# Non-convex bids & \# Steps & runEU & runIP & runCHP \\
    1     & 90    & 14309 & 4.047098 & 2.202199 & 2.073478 \\
    2     & 91    & 13986 & 4.648906 & 2.081456 & 2.065098 \\
    3     & 91    & 14329 & 4.231441 & 2.294439 & 2.102532 \\
    4     & 92    & 14594 & 4.82378 & 2.050598 & 2.345987 \\
    5     & 89    & 14370 & 4.410432 & 1.860187 & 1.819655 \\
    6     & 87    & 14389 & 3.78953 & 1.907919 & 2.25707 \\
    7     & 89    & 14783 & 4.631189 & 2.104128 & 2.149526 \\
    8     & 86    & 14414 & 3.8165 & 1.842994 & 2.142367 \\
    9     & 88    & 14860 & 4.603193 & 1.943571 & 2.043593 \\
    10    & 86    & 14677 & 3.73881 & 2.0862 & 1.897801 \\
    \end{tabular}%
  \label{tab:runtimes}%
\end{table}%

\subsection{Paradoxically accepted or rejected non-convex bids}

Paradoxically rejected bids correspond to a missed trading opportunity, that is the market participant could make more profits by selling (resp. buying) more. We hence take as a definition of \emph{paradoxically rejected non-convex bids}, bids such that in case of self-dispatch where the market participant optimizes its production (resp. consumption) taking into account only its own technical constraints and the electricity market prices, she or he could be more profitable than with the market operator's decisions, by selling (resp. buying) more power. This definition slightly reformulates the definition in terms of average prices usually given for ``paradoxically rejected block orders" in European markets, in order to make it applicable when Convex Hull Pricing is considered. When Convex Hull Pricing is used, a market participant can sometimes be more profitable by selling (resp. buying) less at some hours than what has been decided by the market operator, without being fully rejected. This situation cannot occur with European or IP Pricing as the continuous variable decisions made by the market operator are optimal for the market participant once its binary ``commitment variable" is considered fixed (roughly speaking, optimal commitment decisions are made by market operators, then marginal pricing is used).

The issue of paradoxically rejected bids (PRBs) in European markets is well-known and of concerns to market participants, see e.g. a discussion of the statistics of 2015 in \cite{madanivanvyve2016a}. Numerical results presented below tends to show empirically what intuition could suggest (see for example the toy examples discussed in Section \ref{section-IPpricing} and Section \ref{section-europeanrules}): their number is on average reduced in the case of IP Pricing and Convex Hull pricing, see Table \ref{tab:pabprb}. Experiments presented in \cite{madanivanvyve2016a} also show that this number is on average drastically reduced for very combinatorial instances involving a large number of fully indivisible orders (so-called block orders in European markets).

This reduction of PRBs is at the expense of having paradoxically accepted bids (PABs), i.e. bids incurring losses at the given electricity prices, which require uplifts to make participants whole. Though this raises implementation issues regarding the uplifts, this situation may be seen as preferable: a market participant is better with a paradoxically accepted bid for which an uplift is received than with an opportunity cost in case the bid is paradoxically rejected. Note that by definition, there are no paradoxically accepted bids with European Pricing. Let us also note that for the instance \#2, the fact that there is no PRB and no PAB with CHP doesn't mean there is a market equilibrium, and in fact Table \ref{table-num-1} shows that the total deviation from an equilibrium is 439.193. Such an outcome could correspond for example to a case where a market participant, while profitable, could be more profitable by selling \emph{less} power at some hours of the day while still satisfying its technical constraints: hence the corresponding bid is neither PAB nor PRB according to the definitions given above.

\begin{table}[htbp]
  \centering
  \caption{Number of paradoxically accepted (resp. rejected) non-convex bids for each pricing rule}
    \begin{tabular}{cccccccc}
    Inst  & \# Non-Convex bids & pabEU & prbEU & pabIP & prbIP & pabCHP & prbCHP \\
    1     & 90    & 0     & 2     & 1     & 0     & 1     & 1 \\
    2     & 91    & 0     & 1     & 1     & 0     & 0     & 0 \\
    3     & 91    & 0     & 5     & 1     & 0     & 0     & 1 \\
    4     & 92    & 0     & 2     & 1     & 0     & 1     & 5 \\
    5     & 89    & 0     & 4     & 1     & 0     & 0     & 0 \\
    6     & 87    & 0     & 1     & 2     & 0     & 1     & 1 \\
    7     & 89    & 0     & 2     & 1     & 0     & 1     & 1 \\
    8     & 86    & 0     & 2     & 1     & 0     & 0     & 2 \\
    9     & 88    & 0     & 2     & 2     & 0     & 0     & 3 \\
    10    & 86    & 0     & 2     & 1     & 0     & 0     & 1 \\
    \end{tabular}%
  \label{tab:pabprb}%
\end{table}%

\section{Conclusions}

Convex Hull Pricing, IP Pricing and European-like market rules have been compared in a power exchange setting with non-convex demand bids and bids including start-up costs, ramp constraints and minimum power output levels. For this purpose, it has been shown that Convex Hull prices can be  efficiently computed by considering the continuous relaxation of the welfare maximizing program and the associated dual variables when these stylized bidding products are considered. Convex Hull Pricing and IP Pricing have also been technically described with unified notation in such a two-sided auction setting, including the uplift minimizing property of Convex Hull prices and the equilibrium property of IP prices once the appropriate settlement rule is specified. This settlement rule relies both on electricity prices and on so-called start-up/commitment prices. European-like market rules should be considered as a variant of IP Pricing where only non-convex bid selections not incurring losses at the electricity market prices are considered, and this can be specified by imposing non-negativity constraints on so-called “start-up prices” associated with committed units.

The European approach has the advantage of avoiding so-called uplifts which are discriminatory payments that can raise implementation issues and concerns of market participants. Because there are no uplifts, there may be less market manipulation possible, though this is an intuitive statement. Investigating gaming opportunities under the different pricing rules is an interesting venue for further research. Such an issue is also highlighted in \cite{HUPPMANN2018622}, Section 5, or discussed in \cite{liberopoulos2016}. On the other side, the current European approach decreases welfare and is computationally much more challenging. Numerical experiments have been conducted on realistic instances to assess the trade-offs between the welfare losses in the case of European rules, and the uplifts required by either IP Pricing or Convex Hull Pricing. It turns out that both are relatively rather small compared to the total welfare. It also turns out that the number of so-called paradoxically rejected non-convex bids under European market rules is on average reduced when IP Pricing or Convex Hull Pricing is used. However, this doesn’t hold for all individual instances. This reduction of paradoxically rejected non-convex bids is at the expanse of allowing so-called paradoxically accepted bids which are bids which would incur losses given the electricity market prices, and hence require uplifts to make participants whole. However, as mentioned above, the total amount of uplifts required seem relatively small in practice. The outcomes IP and EU pricing exhibit on the toy examples introduced in Section \ref{subsec-examples} are also questionable. At the light of these toy examples and the numerical results, it seems that non-uniform pricing rules not relying on marginal pricing may be preferable. The reference \cite{dualpricing2017} proposes a so-called ``revenue neutral" pricing rule, a property which is also shared by the pricing rule introduced in \cite{vanvyve}. These options should be further investigated as they are computationally-efficient and could yield outcomes which are more relevant from an economic perspective.

Other pricing rules have been proposed recently in the literature, and a better assessment of the pros and cons of each approach in a real applied setting is still needed. As the pricing rules which have been considered here are historical pricing rules taken as key references in the literature, the present contribution aims at being a step in that direction.

\textbf{Acknowledgements:}

Dr. Mehdi Madani was supported by a Fellowship of the Belgian American Educational
Foundation.


\appendix

\section{Omitted proofs in main text}

\textbf{Proof of Lemma \ref{lemma-profit-loss} [adapted from \cite{mvv-revisiting}, Lemma 6 in Section 4]}
\begin{proof}
The complementarity conditions associated to (\ref{X-eq1})-(\ref{X-eq2}) are given by $s^{max}_{ic}(u_c - x_{ic})$ and $s^{min}_{ic}(x_{ic} - r_{ic}u_c)$ where $u_c=1$ as the bid considered is accepted, which directly gives that $s^{max}_{ic} x_{ic} = s^{max}_{ic}$ (*) and $s^{min}_{ic} x_{ic} = s^{min}_{ic} r_{ic}$ (**).

Multiplying  (\ref{cc-dual-xic}) by $x_{ic}$ and using the last two identities (*) and (**) yield:

\begin{equation}
s_{ic}^{max} - r_{ic} s_{ic}^{min} + (Q^{ic}g^{down}_{c,t(ic)-1} - Q^{ic}g^{up}_{c,t(ic)-1})x_{ic} + (Q^{ic}g^{up}_{c,t(ic)} - Q^{ic}g^{down}_{c,t(ic)})x_{ic} + Q_{ic} \pi_{t(ic)}x_{ic} = P^{ic}Q_{ic}x_{ic} \label{lemma-int-eq1}
\end{equation}

On the other hand, complementarity conditions associated to (\ref{X-eq6})-(\ref{X-eq7}) are respectively given by:

\begin{multline}
g^{up}_{c,t} \left( RU_c\ u_c - \sum_{ic \in I_c | t(ic) = t} (Q^{ic})x_{ic} + \sum_{ic \in I_c  | t(ic) = t+1 } (Q^{ic})x_{ic} \right) = 0 \\ \forall t \in \{1,...,T-1\}, \forall c \in C \hspace{0.7cm}  \label{X-eq6-cc}
\end{multline}

\begin{multline}
g^{down}_{c,t} \left(RD_c\ u_c - \sum_{ic \in I_c | t(ic) = t+1} (Q^{ic})x_{ic}  +\sum_{ic \in I_c | t(ic) = t} (Q^{ic})x_{ic} \right) = 0 \\ \forall t \in \{1,...,T-1\}, \forall c \in C \hspace{0.7cm}   \label{X-eq7-cc}
\end{multline}

Summing (\ref{X-eq6-cc}) to (\ref{X-eq7-cc}), then summing up over $t$ and using the fact that $u_c = 1$ as the bid considered is accepted provides after rearrangement:

\begin{equation}
\sum_{ic} \left( (Q^{ic}g^{down}_{c,t(ic)-1} - Q^{ic}g^{up}_{c,t(ic)-1})x_{ic} + (Q^{ic}g^{up}_{c,t(ic)} - Q^{ic}g^{down}_{c,t(ic)})x_{ic} \right) = \sum_t (RU_c g^{up}_{c,t} + RD_c g^{down}_{c,t}) \label{lemma-int-eq2}
\end{equation}

The result of the Lemma is then obtained by summing up (\ref{lemma-int-eq1}) over the $ic \in I_c$ and using the identity (\ref{lemma-int-eq2}). \end{proof}

\textbf{Proof of Theorem \ref{thm-ip-equilibrium}}

\begin{proof}
Part I: $u_c^*$ is optimal.

In the first case where $u_c^* = 1$, the alternative $u_c^-= 0$ yields a zero profit while for the current value $u_c^* = 1$, as $\delta^*_c = \sum_{ic_a} -Q_{ic_a}(\pi_{t(ic_a)}-P^{ic_a})x^*_{ic_a} - F_{c_a}$ according to Proposition \ref{proposition-da}, the profit maximizing objective in (\ref{indiv-obj}) evaluated at $(u^*_c, x^*_{ic})$ is also 0 and hence $u_c^* = 1$ is at least a value as good as $u_c^- = 0$ (Part II below will show that it is indeed not possible to improve the profit value with $u_c^* = 1$ as the $x_{ic}^*$ are also optimal). Note that when $\delta^*_c >0$, it corresponds to a market participant profitable at the electricity market prices $\pi$ and the part of the payment given by $-\delta^*_c u_c^*$ corresponds to a marginal rent given back to the market operator: market rules can obviously specify that this marginal rent can be kept by market participants.

In the second case where $u_c^* = 0$ and the profit is zero, considering the alternative $u_c^- = 1$ and the associated optimal decisions $x^-_{ic}$, according to Proposition \ref{proposition-dr}, $\delta^*_c \geq \sum_{ic} -Q_{ic}(\pi_{t(ic)}-P^{ic})x^-_{ic} - F_c$ and hence the profit maximizing objective in (\ref{indiv-obj}) evaluated at $(u^-_c, x^-_{ic})$ is non positive: $u_c^* = 0$ is an optimal decision.

Part II: the $x_{ic}^*$ are optimal for $u_c^*$ fixed.

Let us first note that in the case where $u_c^* = 0$, the $x_{ic}$ are in all cases constrained to be null and there is nothing to show (the values $x^*_{ic}=0$ are trivially optimal).

In all cases, let us consider (\ref{indiv-obj})-(\ref{indiv-conds}) where (\ref{indiv-conds}) is given by the conditions (\ref{X-eq1})-(\ref{X-eq2}), (\ref{X-eq6})-(\ref{X-eq7}) and where $u_c$ is considered as a \emph{parameter} with the given value $u_c^*$: hence, the objective (\ref{indiv-obj}) to maximize is reduced to $\sum_{ic\in I_c}   [P^{ic} - \pi^*_{t(ic)}]  Q^{}_{ic}x_{ic}$. Now let us consider the optimality conditions for this problem given by the primal, dual and complementarity conditions. These dual conditions are:

\begin{align}
&s_{ic}^{max} - s_{ic}^{min} + (Q^{ic}g^{down}_{c,t(ic)-1} - Q^{ic}g^{up}_{c,t(ic)-1}) + (Q^{ic}g^{up}_{c,t(ic)} - Q^{ic}g^{down}_{c,t(ic)}) = (P^{ic} - \pi_{t(ic)})Q_{ic} \\
&s^{max}, s^{min} \geq 0
\end{align}

And the complementarity conditions are only those associated to the primal conditions (\ref{X-eq1})-(\ref{X-eq2}), (\ref{X-eq6})-(\ref{X-eq7}), for the given $c$.

It is then straightforward to check that all these optimality conditions are part of the optimality conditions for (SWP-FIXED) given by the objective (\ref{lin-primal-obj}) and conditions (\ref{chp-primal-2}), (\ref{X-eq1})-(\ref{X-eq2}), (\ref{X-eq6})-(\ref{X-eq7}) and (\ref{fix1cond})-(\ref{fix0cond}).

Hence, the values $x^*_{ic}$ provided by the market operator are optimal for the profit maximizing program of participant $c$ with $u_c$ fixed. 

Part I and Part II together proves Theorem \ref{thm-ip-equilibrium}.
\end{proof}

\bf{References}

\bibliography{template}

\bibliographystyle{plainnat}

\end{document}